   \def\MR#1{}
\newtheorem{theorem}{Theorem}[section]
\theoremstyle{definition}
\newtheorem{question}[theorem]{\textbf{Question}}
\newtheorem{claim}{\textsc{Claim}}
\renewcommand{\emptyset}{\varnothing}
\renewcommand{\rho}{\varrho}
\providecommand{\NNb}{\mathbf{N}}
\providecommand{\RRb}{\mathbf{R}}
\renewcommand{\xi}{a}
\begin{document}

\title{On the number of gaps of sequences with Poissonian Pair Correlations}

\author{Christoph Aistleitner}
\address{Institute of Analysis and Number Theory, Graz University of Technology | Kopernikusgasse 24/II, 8010 Graz, Austria}
\email{aistleitner@math.tugraz.at}

%
\author{Thomas Lachmann}
\address{Institute of Analysis and Number Theory, Graz University of Technology | Kopernikusgasse 24/II, 8010 Graz, Austria}
\email{lachmann@math.tugraz.at}
%

\author{Paolo Leonetti}
\address{Institute of Analysis and Number Theory, Graz University of Technology | Kopernikusgasse 24/II, 8010 Graz, Austria}
\email{leonetti.paolo@gmail.com}

%
\author{Paolo Minelli}
\address{Institute of Analysis and Number Theory, Graz University of Technology | Kopernikusgasse 24/II, 8010 Graz, Austria}
\email{minelli@math.tugraz.at}

\thanks{C.A. is supported by the Austrian Science Fund (FWF), projects F-5512, I-3466 and Y-901. T.L. is supported by FWF project Y-901. P.L. is supported by FWF project F-5512. P.M. is supported by FWF project I-3466.}

\subjclass[2010]{Primary 11K06; Secondary 11B05, 11K99.}


\keywords{Poissonian pair correlations; equidistribution; distinct gap lengths.}

\begin{abstract}
\noindent{} A sequence $(x_n)$ on the torus is said to have Poissonian pair correlations if 
$
\# \{1\le i\neq j\le N: |x_i-x_j| \le s/N\}=2sN(1+o(1))
$ 
for all reals $s>0$, as $N\to \infty$. 

It is known that, if $(x_n)$ has Poissonian pair correlations, then the number $g(n)$ of different gap lengths between neighboring elements of $\{x_1,\ldots,x_n\}$ cannot be bounded along every index subsequence $(n_t)$. 
First, we improve this by showing that the maximum among the multiplicities of the neighboring gap lengths of $\{x_1,\ldots,x_n\}$ is $o(n)$, as $n\to \infty$. 
%
Furthermore, we show that, for every function $f: \mathbf{N}^+\to \mathbf{N}^+$ with $\lim_n f(n)=\infty$, there exists a sequence $(x_n)$ with Poissonian pair correlations and such that 
$g(n) \le f(n)$ for all sufficiently large $n$. 
This answers negatively a question posed by G. Larcher.
\end{abstract}
\maketitle
\thispagestyle{empty}

\section{Introduction}\label{sec:introduction}

Let $x=(x_n)$ be a sequence on the torus, hereafter identified with the interval $[0,1)$. For every positive integer $N$ and real $s>0$, define
$$
F_{x,N}(s):=\frac{1}{N}\#\left\{1\le i\neq j\le N: \,|x_i-x_j| < \frac{s}{N}\right\},
$$
where $|\cdot|$ stands for the distance from the nearest integer, that is, $|z|=\min(z,1-z)$ for all $z \in [0,1)$. 
The sequence $x$ is said to have \emph{Poissonian pair correlations} if 
$$
\lim_{N\to \infty} F_{x,N}(s)=2s
$$
for all $s>0$. The original motivation for the study of sequences with Poissonian pair correlations comes from quantum physics, see \cite{MR3822226, MR3736514, LarcherMathProc} and references therein. It has been recently shown that this is a stronger notion than the classical uniform distribution, the converse being false in general, see \cite{MR3703937, MR3673633, Marklof2019, MR3704235}. We recall that there are only a couple of "explicit" sequences for which it could be proved that they have Poissonian pair correlations, see \cite{MR3340360, MR3336607}. 


Given a sequence $(x_n)$ and an integer $N\ge 2$, let $G(N)$ be the set of different gap lengths between neighboring elements of $\{x_1,\ldots,x_N\}$, that is, 
$$
G(N):=\{r \in [0,1): r=|x_{\sigma(k+1)}-x_{\sigma(k)}| \,\,\text{ for some }k=1,\ldots,N\},
$$
where $\sigma: \{1,\ldots,N\} \to \{1,\ldots,N\}$ is a permutation such that $x_{\sigma(1)}\le \cdots \le x_{\sigma(N)}$ and ${\sigma(N+1)}:={\sigma(1)}$. Set $g(N):=\# G(N)$ and 
let $\{\ell_{N,1},\ldots,\ell_{N,g(N)}\}$ be the elements of $G(N)$ in increasing order, so that $\ell_{N,1}<\cdots<\ell_{N,g(N)}$. For each $i=1,\ldots,g(N)$, let $\varphi_{N,i}$ be the number of gaps of length $\ell_{N,i}$, that is, 
$$
\varphi_{N,i}:=\#\{j \in \{1,\ldots,N\}: |x_{\sigma(j+1)}-x_{\sigma(j)}|=\ell_{N,i}\}.
$$


The following result has been show in \cite{LarcherNegative}, cf. also \cite[Theorem 1]{LarcherSurvey}:
\begin{theorem}\label{thm:original_larcher}
Let $(x_n)$ be a sequence with Poissonian pair correlations. Then 
\begin{equation}\label{eq:finitegappropertylarcher}
\liminf_{n\to \infty} g(n)=\infty,
\end{equation}
that is, there is no subsequence $({n_t})$ of indexes with a finite number of distinct gap lengths between neighboring elements.
\end{theorem}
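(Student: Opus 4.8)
The plan is to argue by contradiction: suppose \eqref{eq:finitegappropertylarcher} fails, so that there are an integer $K\ge 1$ and a subsequence $(n_t)$ of indices with $g(n_t)\le K$ for every $t$. The idea is that if the point set $\{x_1,\dots,x_{n_t}\}$ has at most $K$ distinct gap lengths, then some gap length is both very frequent and located near the natural scale $1/n_t$; this forces the step function $F_{x,n_t}(\cdot)$ to have a jump of fixed positive height at a point $s_0$ confined to a fixed compact interval, which cannot happen because Poissonian pair correlations make $F_{x,n_t}\to 2s$ uniformly on compact sets. Two preliminary remarks will be used. First, since each $F_{x,N}(\cdot)$ is nondecreasing and $F_{x,N}(s)\to 2s$ pointwise with continuous limit, the convergence is uniform on every interval $[0,S]$ (P\'olya's theorem on convergence of distribution functions). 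Second, for every $s>0$ one has $\#\{1\le i\ne j\le N:x_i=x_j\}\le N F_{x,N}(s)$, so dividing by $N$ and letting $N\to\infty$ gives $\#\{1\le i\ne j\le N:x_i=x_j\}=o(N)$; in particular $\{x_1,\dots,x_N\}$ has only $o(N)$ zero gaps.

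Now fix $N=n_t$ and use the notation $\ell_{N,1}<\dots<\ell_{N,g(N)}$, $\varphi_{N,1},\dots,\varphi_{N,g(N)}$ from the Introduction, so that $\sum_i\varphi_{N,i}=N$ and $\sum_i\varphi_{N,i}\ell_{N,i}\le 1$; indeed the $N$ consecutive arc lengths sum to $1$, and the convention in the definition of $G(N)$ that replaces any arc longer than $1/2$ by its complement only decreases this total. The gap lengths exceeding $2/N$ account for fewer than $N/2$ of the $N$ gaps, since otherwise $\sum_i\varphi_{N,i}\ell_{N,i}>(2/N)(N/2)=1$. Combining this with the bound on zero gaps, the gap lengths lying in $(0,2/N]$ — of which there are at most $g(N)\le K$ — together account for more than $N/4$ gaps once $t$ is large, so some such gap length $\ell_{N,i_0}\in(0,2/N]$ has multiplicity $\varphi_{N,i_0}>N/(4K)$.

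To finish, put $s_0:=N\ell_{N,i_0}$, which lies in the fixed interval $(0,2]$. Each of the $\varphi_{N,i_0}$ arcs of length $\ell_{N,i_0}$ joins two points of $\{x_1,\dots,x_N\}$ at torus distance exactly $\ell_{N,i_0}=s_0/N$ (here $\ell_{N,i_0}<1/2$, and distinct arcs correspond to distinct ordered pairs of indices), so $F_{x,N}$ has a jump at $s_0$ of height at least $2\varphi_{N,i_0}/N>1/(2K)$; that is, $\lim_{s\downarrow s_0}F_{x,N}(s)\ge F_{x,N}(s_0)+1/(2K)$. Fix $\varepsilon<1/(8K)$. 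By uniform convergence on $[0,3]$, for $t$ large we have $F_{x,n_t}(s_0)>2s_0-\varepsilon$ and $\lim_{s\downarrow s_0}F_{x,n_t}(s)\le 2s_0+\varepsilon$, whence $1/(2K)\le 2\varepsilon<1/(4K)$, a contradiction. Therefore no such subsequence $(n_t)$ exists, and \eqref{eq:finitegappropertylarcher} holds.

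I expect the crux to be the scale-localization in the second paragraph. A priori the most frequent gap length could sit at a scale far larger than $1/N$, in which case its jump would occur at $s_0=N\ell\to\infty$ and be invisible to local uniform convergence; it is precisely the normalization $\sum_i\varphi_{N,i}\ell_{N,i}\le 1$ that confines $s_0$ to $[0,2]$ and makes the argument go through. The remaining ingredients — upgrading pointwise to local uniform convergence, and the $o(N)$ bound on coincident pairs — are routine.
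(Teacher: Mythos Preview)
Your argument is correct. Note that the paper does not prove Theorem~\ref{thm:original_larcher} directly (it is cited from \cite{LarcherNegative}) but instead deduces it from the stronger Theorem~\ref{thm:varphinotlarge} via the pigeonhole inequality~\eqref{eq:inequalitybasic}. Your proof is essentially the paper's proof of Theorem~\ref{thm:varphinotlarge} preceded by that very pigeonhole step: from $g(n_t)\le K$ and $\sum_i \varphi_{N,i}\ell_{N,i}\le 1$ you extract a gap length of multiplicity $\gg N$ located at scale $O(1/N)$, and then argue that $F_{x,N}$ has a jump of fixed positive height at some $s_0$ confined to a fixed compact interval, contradicting the Poissonian property. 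The only cosmetic difference is in how the contradiction is read off: the paper localises $n_t\ell_{n_t,i_t}$ into one cell of a partition of $[0,\beta)$ into $m$ equal pieces and compares $F_{x,n_t}$ at the two endpoints of that cell, whereas you invoke P\'olya's theorem to upgrade the pointwise convergence $F_{x,N}(s)\to 2s$ to uniform convergence on $[0,3]$ and bound the jump directly. Both devices encode the same idea, and your separate handling of zero gaps (which the paper's partition argument absorbs automatically into the first cell) is harmless extra care.
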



%

Note that, for each $N\ge 2$, we have $\sum_{i\le g(N)} \ell_{N,i} \varphi_{N,i}=1$ and $\sum_{i\le g(N)}\varphi_{N,i}=N$. 
In particular, we have $\max_{i\le g(N)}\varphi_{N,i} \ge \frac{N}{g(N)}$, that is, 
\begin{equation}\label{eq:inequalitybasic}
g(N) \ge \frac{N}{\max_{i\le g(N)}\varphi_{N,i}}.
\end{equation}

The aim of this article is twofold: first, we prove a more general version of Theorem \ref{thm:original_larcher}, by showing that also the right-hand side of \eqref{eq:inequalitybasic} is divergent. 
\begin{theorem}\label{thm:varphinotlarge}
Let $(x_n)$ be a sequence with Poissonian pair correlations. Then 
\begin{equation}\label{eq:claimsmallo}
\max_{i\le g(n)} \varphi_{n,i}=o(n)
\end{equation}
as $n\to \infty$, that is, there is no subsequence $({n_t})$ of indexes and constant $c>0$ for which at least one number of distinct gap lengths is $\ge cn_t$.
\end{theorem}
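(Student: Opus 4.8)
The plan is to argue by contradiction: suppose there is a subsequence $(n_t)$, an index $i_t \le g(n_t)$, and a constant $c>0$ such that $\varphi_{n_t,i_t}\ge c n_t$ for all $t$. Write $\ell_t := \ell_{n_t,i_t}$ for the gap length achieving this large multiplicity. The first observation is that the total length constraint $\sum_{i\le g(N)}\ell_{N,i}\varphi_{N,i}=1$ forces $\ell_t \le 1/(c n_t)$, so these $\ge c n_t$ gaps are \emph{short}, of length at most $s/n_t$ for $s = 1/c$. Each such gap of length $\ell_t$ contributes a pair $(x_i,x_j)$ of consecutive points with $|x_i-x_j| = \ell_t < s/n_t$; since these gaps are pairwise internally disjoint intervals, the left endpoints $x_i$ of these gaps are distinct, so we obtain at least $c n_t$ ordered pairs $i\neq j$ with $|x_i - x_j| < s/n_t$, each in fact at distance \emph{exactly} $\ell_t$. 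Counting both orders, $F_{x,n_t}(s) \ge 2c = 2/s$ already from these gaps alone, which is consistent with the Poissonian value $2s$ only if $s\ge 1$; so a naive pair count does not yet give a contradiction, and the point must be to exploit that all these pairs share the \emph{same} distance $\ell_t$.

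The heart of the argument should be a \emph{second-order} or \emph{clustering} phenomenon: if many consecutive gaps among $\{x_1,\dots,x_{n_t}\}$ have exactly the same length $\ell_t$, then the corresponding points come in long arithmetic-progression-like runs $y, y+\ell_t, y+2\ell_t,\dots$, and within such a run of length $m$ one gets not just $m$ pairs at distance $\ell_t$ but also roughly $m$ pairs at distance $2\ell_t$, roughly $m$ at distance $3\ell_t$, and so on, i.e.\ order $m \cdot k$ pairs at distances $\le k\ell_t \le ks/n_t$. Summing the contributions of all runs, for a fixed multiple $k$ one gets $F_{x,n_t}(ks) \gtrsim 2ck$ — roughly — whereas the Poissonian assumption demands $F_{x,n_t}(ks)\to 2ks$. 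Taking $k$ large enough that $2ck$ exceeds $2ks$ is impossible since $2ck$ and $2ks$ grow at the same linear rate in $k$; the gain must instead come from the fact that the $\ge c n_t$ equal gaps cannot all lie in one run — or rather, whether they do or not, the combinatorics of equal consecutive gaps produces an \emph{excess} of close pairs beyond what equidistribution at scale $s/N$ predicts. More precisely, I would try to show that $cn_t$ gaps of common length $\ell_t$ force $F_{x,n_t}(s') \ge 2s' + \delta$ for some fixed $s'>0$ and $\delta = \delta(c) > 0$, contradicting $F_{x,n_t}(s')\to 2s'$.

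To make that rigorous, here is the cleaner route I would actually pursue. Partition the $\ge cn_t$ gaps of length $\ell_t$ into maximal runs of consecutive equal gaps; say run number $r$ has $m_r \ge 1$ gaps, with $\sum_r m_r \ge c n_t$. A run with $m_r$ gaps is a block of $m_r+1$ points in arithmetic progression with common difference $\ell_t$; the number of ordered pairs from that block at distance $< s/n_t$ (recall $\ell_t < 1/(cn_t)$, so take $s$ a suitable multiple of $1/c$) is $2\sum_{k=1}^{m_r}(m_r+1-k)\mathbf{1}[k\ell_t < s/n_t]$. If $\ell_t \le 1/(cn_t)$ then all $k \le K := \lfloor cs \rfloor$ satisfy $k\ell_t < s/n_t$, giving at least $2\sum_{k=1}^{\min(K,m_r)}(m_r+1-k)$ such pairs. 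The key split is between runs with $m_r \le K$ and runs with $m_r > K$. For the "long" runs ($m_r > K$) this is $\ge 2(K m_r - K^2) = 2K m_r(1 - K/m_r) \ge K m_r$ once $m_r \ge 2K$; for short runs it is still $\ge 2 m_r$. In either case we extract $\gtrsim m_r$ pairs \emph{per unit of gap mass}, but with the crucial improvement that long runs give a factor $K$. So either a positive proportion of the $cn_t$ gap mass sits in long runs — and then $F_{x,n_t}(s) \gtrsim (K/2)\cdot (c/2) = $ a quantity that beats $2s$ for $K$ large, contradiction — or almost all gap mass sits in runs of length $< 2K$, in which case there are $\ge cn_t/(2K)$ distinct runs, hence $\ge cn_t/(2K)$ \emph{distinct} left-endpoints that each begin a fresh block; I would then vary $s$ and use equidistribution (Theorem~\ref{thm:original_larcher} is already a consequence of PPC $\Rightarrow$ uniform distribution, which I may freely use) to derive a contradiction from having a linear-sized set of points organized into short identical blocks.

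The main obstacle I anticipate is precisely closing the "short runs" case: a linear number of length-one runs of length $\ell_t$ scattered so that each is isolated is \emph{a priori} compatible with the pair-count bound $F_{x,N}(s)\to 2s$, so one cannot win purely by counting pairs at a single scale. The resolution should be that a length-one run still forces the \emph{two} endpoints of the gap to be within $\ell_t$ of each other, and, because there are $cn_t$ such gaps all of identical length, a pigeonhole on the positions modulo $\ell_t$ (or a Fourier/exponential-sum estimate $\sum_{n\le N} e(h x_n / \ell_t)$) shows the points are themselves far from equidistributed at scale $\ell_t$, contradicting that PPC implies uniform distribution at all scales; alternatively, one slides a window of width $2\ell_t$ and counts, for each such gap, the pair it forms together with the pair it forms with \emph{its own} neighbor gap on one side, which with positive probability is also short. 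I would formalize whichever of these is cleanest; the AP/clustering computation in the "long runs" branch is routine, so essentially all the real work is the "many tiny identical gaps" branch, where I expect to invoke uniform distribution plus an averaging argument rather than the pair-correlation count itself.
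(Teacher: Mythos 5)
Your proposal has a genuine gap, and in fact both of its branches fail; the missing idea is that Poissonian pair correlations for \emph{all} $s$ controls not just $F_{x,N}(s)$ but the increments $F_{x,N}(s_2)-F_{x,N}(s_1)\to 2(s_2-s_1)$, i.e.\ the number of pairs whose distance lies in a \emph{narrow window} $[s_1/N,\,s_2/N)$ away from the origin. Since your $\ge c n_t$ gaps all have the \emph{identical} length $\ell_t$, and $\ell_t\le \beta/n_t$ for some fixed $\beta$ (by the total-length constraint, as you correctly note), one simply partitions $[0,\beta/n_t)$ into $m$ windows of width $\beta/(m n_t)$ and pigeonholes: along a further subsequence, $\ell_t$ lands in the same window $Q_{j_m,t}$, so that window receives $\ge 2cn_t$ ordered pairs, while PPC caps it at $(2\beta/m+o(1))\,n_t$. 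Taking $m>\beta/c$ is the contradiction. This is the paper's entire proof; no run/clustering structure, no uniform distribution, and no Fourier input is needed. Your write-up only ever evaluates $F$ on windows anchored at $0$, which is exactly why you correctly observe that "a naive pair count does not yet give a contradiction" --- at a single scale, $cn_t$ identical gaps are indistinguishable from a Poissonian configuration.

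Concretely, neither of your two branches closes. In the "long runs" branch you claim $F_{x,n_t}(s)\gtrsim (K/2)(c/2)$ with $K=\lfloor cs\rfloor$ "beats $2s$ for $K$ large"; but $K$ grows linearly in $s$, so the bound is $\approx c^2 s/4$ versus $2s$, and since $c\le 1$ (as $\varphi_{n,i}\le n$) this never exceeds $2s$ --- you noticed this obstruction yourself two sentences earlier and it does not go away. In the "short runs" branch you acknowledge you cannot close the argument and propose to fall back on equidistribution or exponential sums; this cannot work, because $cn$ isolated gaps of exact length $\approx 1/(2n)$ can coexist with perfect uniform distribution at every scale, so no amount of equidistribution input detects them. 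Only the pair-correlation statistic localized to a window of width $o(1/n)$ around $\ell_t$ sees the anomaly, and that localization is the step your proposal is missing.
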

As an immediate consequence of Theorem \ref{thm:varphinotlarge} and the Three Gap Theorem \cite{MR3706822}, we obtain that for every $\alpha \in \RRb$, the Kronecker sequence $(\alpha n)$ does not have Poissonian pair correlations, cf. also \cite{LarcherNegative}.

Secondly, during the open problems session of the Workshop and Winter School on Local Statistics of Point Sequences (Linz, 2019), Gerhard Larcher asked whether Theorem \ref{thm:original_larcher} could be extended as it follows, cf. also \cite[Problem 4]{LarcherSurvey}:
\begin{question}\label{q:larcherlinz}
Does there exist a "slowly-growing" function $f: \mathbf{N}^+\to \mathbf{N}^+$ with $\lim_{n\to \infty}f(n)=\infty$ such that, if $(x_n)$ has Poissonian pair correlation, then necessarily $g(n)\ge f(n)$ for all $n\ge 2$? For instance, is it true that if $g(n)\le \log\log n$ for infinitely many $n$, then $(x_n)$ does not have Poissonian pair correlations?
\end{question}
It is known that almost all sequences have Poissonian pair correlations, see e.g. \cite{HinrichsEco} and \cite{SteinerbergerRd}. In addition, it is easy to see that almost all sequences in $[0,1)$ have all different gap lengths between neighboring elements. 
This implies that, with probability $1$, a sequence $(x_n)$ has Poissonian pair correlations and $g(n)=n$ for all $n \in \NNb^+$.

We show, in a strong sense, that the answer to Question \ref{q:larcherlinz} is negative.
\begin{theorem}\label{thm:answerlarcher}
Fix a function $f: \mathbf{N}^+\to \mathbf{N}^+$ with $\lim_{n\to \infty}f(n)=\infty$. Then there exists a sequence $(x_n)$ with Poissonian pair correlations such that $g(n)\le f(n)$ for all sufficiently large $n$.
\end{theorem}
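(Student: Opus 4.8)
The plan is to construct $(x_n)$ by concatenating longer and longer finite blocks, each of which is a carefully chosen finite sequence with few distinct gap lengths, while ensuring that the pair correlation statistic $F_{x,N}(s)$ converges to $2s$. The natural building block is a union of equally spaced points (arithmetic progressions), since a set of $m$ equally spaced points on the torus has exactly one gap length $1/m$, and more generally a union of several ``interleaved'' arithmetic progressions of comparable spacing produces only boundedly many distinct gap lengths. The difficulty is that a single arithmetic progression does \emph{not} have Poissonian pair correlations — its pair correlation function is a step function, not $2s$. So the blocks must be designed as perturbations of unions of arithmetic progressions: random translates (or random ``rounded'' subsets) of grids, averaged over many grids of different sizes, so that the pair correlation count averages out to the Poissonian value.

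Concretely, I would proceed as follows. First, reduce to constructing, for each $k$, a finite set $S_k \subseteq [0,1)$ of size $N_k$ (with $N_k$ growing very rapidly, e.g. $N_k = k!$ or faster, chosen so that $f(N_k) \to \infty$ slowly enough) such that (i) $S_k$ has at most $h(k)$ distinct gap lengths for some $h(k) \to \infty$ much slower than $f$, and (ii) the ``localized'' pair correlation counts $\#\{i \neq j \in S_k : |s_i - s_j| < s/N_k\}$ equal $2 s N_k (1 + o(1))$ uniformly for $s$ in a growing range $[0, s_k]$ with $s_k \to \infty$. Then set $(x_n)$ to be the concatenation $S_1, S_2, S_3, \ldots$; the point is that once $n$ lies past the start of block $k$, the gap structure of $\{x_1, \ldots, x_n\}$ is dominated by (a subset of) $S_k$ together with at most $N_1 + \cdots + N_{k-1} = o(N_k)$ leftover points from earlier blocks, so $g(n)$ is at most $h(k)$ plus a negligible correction, which is $\le f(n)$ for $n$ large since $N_{k-1} = o(N_k)$ and $f$ is eventually large. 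For the pair correlation: for $N$ in the range of block $k$, the count $\# \{i \neq j \le N : |x_i - x_j| < s/N\}$ splits into within-block contributions (handled by (ii), since the relevant window $s/N$ is comparable to the grid spacing once $N \approx N_k$) and cross-block contributions, which are $O((N_1 + \cdots + N_{k-1}) \cdot s)$ relative to the main term and hence negligible provided the block sizes grow fast enough.

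For step (i)–(ii), the block $S_k$ itself would be built by a probabilistic/averaging argument: take $S_k$ to be a union of $M_k$ arithmetic progressions, the $j$-th having $N_k / M_k$ points with common difference $M_k / N_k$ and a translate $\theta_j$, where the $\theta_j$ are chosen (deterministically after a probabilistic existence argument, or explicitly) so that the pairwise distance multiset mimics a Poissonian one. The union of $M_k$ such grids has at most $\binom{M_k}{2}+1$-ish distinct gap lengths — actually one must be more careful and instead perturb each grid point individually by a small random amount drawn from a suitable distribution so that the gap lengths collapse to few values while the pair correlations smooth out; this is the delicate balancing act. An alternative, cleaner route is to use the known fact (from \cite{HinrichsEco}, \cite{SteinerbergerRd}) that a \emph{random} perturbation of a fixed grid, with perturbations of size $\asymp 1/N_k^{1-\varepsilon}$, has Poissonian pair correlations with high probability; one then needs to check that a typical such perturbation still has few distinct gap lengths, or rather, one quantizes the perturbations to a coarse sub-grid so that only boundedly many gap values can occur. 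Making (i) and (ii) hold \emph{simultaneously} — enough randomness for Poissonian correlations, enough rigidity for few gaps — is the heart of the matter.

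The main obstacle I anticipate is precisely this tension in constructing $S_k$: Poissonian pair correlations is a genuine ``randomness'' condition that fails for any exactly periodic configuration, yet having few distinct gap lengths is a strong ``rigidity'' condition. The resolution must exploit that we only need the pair correlation \emph{asymptotics} (so the perturbations can be on a scale that is large compared to $1/N_k^2$ but we have room to quantize them to $\sim h(k)$ possible values), together with the freedom to let $N_k$ grow as fast as we like relative to $f$, which buys us $g(n) \le f(n)$ with lots of slack. I would expect the technical core to be a lemma asserting: for every $\varepsilon > 0$ and every $L$, there is $N_0$ such that for all $N \ge N_0$ there is a finite set of $N$ points in $[0,1)$ with at most $L$ distinct neighbor-gap lengths whose pair correlation count is within $\varepsilon$ of $2sN$ for all $s \in [0, 1/\varepsilon]$ — proved by an explicit layered construction of nested arithmetic progressions of geometrically increasing density, or by the second-moment method applied to a quantized random model, and the bulk of the work is the variance/averaging estimate showing the pair correlation count concentrates.
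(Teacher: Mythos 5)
Your overall strategy (quantize the points to a grid to limit the number of distinct gap lengths, use randomness to obtain the pair correlations, and let the quantization become finer as $n$ grows) is the right one, and it is essentially the mechanism the paper uses. But the reduction to independent finite blocks $S_k$ of super-rapidly growing size $N_k$, glued by concatenation, has a genuine gap at the step ``$g(n)$ is at most $h(k)$ plus a negligible correction.'' Counting points is not the same as counting distinct gap lengths: each of the $N_1+\cdots+N_{k-1}=o(N_k)$ leftover points from earlier blocks can split an existing gap into two gaps of new lengths, so the ``negligible correction'' can be as large as $2(N_1+\cdots+N_{k-1})$, which may vastly exceed $f(n)$ (think of $f(n)=\lfloor \log\log n\rfloor$ and $N_{k-1}\approx\sqrt{N_k}$). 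The same problem occurs \emph{inside} a block: for $n$ strictly between the start and the end of block $k$, the set $\{x_1,\dots,x_n\}$ contains only a prefix of $S_k$, and a subset of a set with few distinct gap lengths can have many (a prefix of a union of arithmetic progressions need not contain any complete progression). Your condition (i) on the full block $S_k$ therefore does not control $g(n)$ for intermediate $n$, which is exactly where the theorem has to be verified.

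The repair, and this is what the paper does, is to make the grid structure global rather than per-block: all points produced up to time $n$ lie on a single fine dyadic grid $A_m$ of spacing $2^{-(m+a(m))}$, and --- crucially --- the set $\{x_1,\dots,x_n\}$ is guaranteed to \emph{contain an entire coarse grid} $B_{m-1}$ of spacing $2^{-b(m-1)}$, because the ``deterministic blocks'' insert all missing points of $B_m\setminus B_{m-1}$ as one consecutive run before the next random block. Then every neighboring gap is a positive multiple of $2^{-(m+a(m))}$ and is at most $2^{-b(m-1)}$, so $g(n)\le 2^{m+a(m)-b(m-1)}$ for \emph{every} intermediate $n$, independently of which random points happen to be present so far. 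This also shows why the blocks should grow only geometrically (the $m$-th random block has $2^{m-1}$ points) rather than like $k!$: with super-fast growth, for $n$ in the interior of block $k$ the statistic $F_{x,n}(s)$ at scale $s/n$ is governed by a small prefix of $S_k$ at a scale far coarser than the one for which $S_k$ was designed, and your uniformity-in-$s$ condition (ii) would have to hold for $s$ up to about $N_k/N_{k-1}$, which amounts to re-proving the whole theorem inside each block. In short, you have correctly identified the tension between randomness and rigidity, but the concatenation scheme as described does not control $g(n)$ (nor $F_{x,n}$) at intermediate times; the nested-grid-plus-deterministic-filling device needed to do so is the missing idea.
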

The proof of Theorem \ref{thm:answerlarcher} follows in Section \ref{sec:profmain}.


\subsection{Notations.} We employ the Landau--Bachmann ``Big Oh'' notation $O$ and the associated Vinogradov symbols $\ll$ and $\gg$, and the "small oh" notation $o$. In addition, $\mathbf{N}^+$ and $\RRb^+$ stand for the sets of positive integers and positive reals, respectively. Lastly, given $A\subseteq \RRb$ and $x \in \RRb$, let $\bm{1}_A$ be the indicator function of $A$, that is, $\bm{1}_A(x)=1$ if $x \in A$, and $0$ otherwise.

%


\section{Proof of Theorem \ref{thm:varphinotlarge}}

Let us assume for the sake of contradiction that \eqref{eq:claimsmallo} does not hold, i.e., 
$$
\delta:=\limsup_{n\to \infty}\,\frac{\max_{i\le g(n)}\varphi_{n,i}}{n}>0.
$$
Fix a constant $c \in (0,\delta)$. Then there exist a strictly increasing sequence of positive integers $({n_t})$ and an integer sequence $(i_t)$ such that 
$$
i_t \in \{1,\ldots,g(n_t)\}\,\,\,\text{ and }\,\,\,\varphi_{n_t,i_t} \ge c n_t
$$
for all $t \in \NNb^+$. 
Hence, define 
$
\alpha:=\limsup_{t\to \infty} n_t\, \ell_{n_t,i_t}
$ 
and note that $\alpha$ is finite. Indeed, in the opposite, there would exist $t \in \NNb^+$ such that $n_t\,\ell_{n_t,i_t}\ge 2/c$, from which we obtain the contradiction
$$
2\le \ell_{n_t,i_t} \varphi_{n_t,i_t} \le \sum_{i\le g(n_t)}\ell_{n_t,i}\varphi_{n_t,i}=1.
$$
Fix $\beta>\alpha$. It follows that there exists $t_0 \in \NNb^+$ such that $\ell_{n_t,i_t}<\frac{\beta}{n_t}$ for all $t\ge t_0$. 

At this point, fix $m \in \NNb^+$ and define the set
$$
Q_{j,t}:=\left[\frac{(j-1)\beta}{mn_t},\,\frac{j\beta}{mn_t}\right)
$$
for $j=1,\ldots,m$ and $t \in \NNb^+$. Hence $\{Q_{j,t}: j=1,\ldots,m\}$ is a partition of $[0,\beta/n_t)$ for all $t \in \NNb^+$. Therefore there exist $j_m \in \{1,\ldots,m\}$ and an infinite set $T \subseteq \NNb^+$ such that $\ell_{n_t,i_t} \in Q_{j_m,t}$ for all $t \in T$.

It follows by construction that
\begin{displaymath}
\begin{split}
n_tF_{x,n_t}\left(\frac{j_m\beta}{m}\right)&-n_tF_{x,n_t}\left(\frac{(j_m-1)\beta}{m}\right)\\
&\ge \#\left\{1\le i\neq j\le n_t: |x_i-x_j| \in Q_{j_m,t}\right\}\\
&\ge 2\varphi_{n_t,i_t} \ge 2cn_t
\end{split}
\end{displaymath}
for all $t \in T$. Considering that the sequence $(x_n)$ has Poissonian pair correlations, we conclude, dividing by $2n_t$ and letting $t\to \infty$ (with $t\in T$), that 
$
\beta/m\ge c.
$ 
However, this is impossible whenever $m$ is sufficiently large.


\section{Proof of Theorem \ref{thm:answerlarcher}}\label{sec:profmain}

The main idea in the proof of Theorem \ref{thm:answerlarcher} is to construct a sequence of jointly independent random variables, split in deterministic blocks and random blocks, such that each one takes values in rational numbers having suitable powers of $2$ as denominators. 
Then, the cardinality of the random part will be sufficiently large to deduce that the overall sequence has Poissonian pair correlations. 
At the same time, the deterministic part will be sufficiently small not to affect the Poissonian pair correlations property, but sufficiently large to control the number of distinct gaps of the sequence. 

\begin{proof}[Proof of Theorem \ref{thm:answerlarcher}]
For all $m \in \NNb^+$, define $I_m:=(2^{m-1},2^m] \cap \NNb^+$, and set, by convention, $I_0:=\{1\}$. Let $a: \NNb^+\to \NNb^+$ be a weakly increasing function (that is, $a(n) \le a(n+1)$ for all $n\in \NNb^+$) with 
$\lim_{n\to \infty} a(n)=\infty$ 
that will be chosen later. Moreover, let $X=(X_1,X_2,\ldots)$ be a sequence of jointly independent random variables on a probability measure space $(\Omega, \mathscr{F}, \mathrm{P})$ such that, for each $m \in \NNb^+$ and for each $i \in I_m$, $X_i$ has uniform distribution on 
\begin{equation}\label{am}
A_m:=
\left\{\frac{j}{2^{m+a(m)}}: j=0,1,\ldots,2^{m+a(m)}-1\right\},
\end{equation}
and $X_1(\omega):=0$ for all $\omega \in \Omega$ (hence, the random points $X_i$ are sampled on a grid of points with denominators which are a power of $2$, where the size of the denominator increases relatively to $2^i$ as $i$ increases). 
We fix also a  positive real sequence $y=(y_n)$ such that 
$
y_n=n+o(n)
$ 
as $n\to \infty$, and we define 
$$
\tilde{F}_{x,y,N}(s):=\frac{1}{N}\#\left\{1\le i\neq j\le N: \,|x_i-x_j| \le \frac{s}{y_N}\right\}
$$
for all real $s>0$ and $N \in \NNb^+$. In particular, $F_{x,N}=\tilde{F}_{x,y,N}$ provided that $y$ is the identity sequence.


\begin{claim}\label{claim:expectedvalue}
\emph{\textup{(}Expected values of random components.\textup{)}}
Fix $s \in \RRb^+$. 
Then 
$$
\mathbb{E}[\tilde{F}_{X,y,N}(s)]:=\int_\Omega \tilde{F}_{X(\omega),y,N}(s)\, \mathrm{P}(\mathrm{d}\omega)=2s+o(1),\,\,\,\,\,\,\text{ as }N\to \infty.
$$
\end{claim}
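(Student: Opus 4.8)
The plan is to compute the expectation by linearity, reducing the problem to estimating, for a fixed ordered pair $(i,j)$ with $i\neq j$ and $i,j\le N$, the probability $\mathrm{P}(|X_i-X_j|\le s/y_N)$, and then summing these probabilities and dividing by $N$. Write $\mathbb{E}[\tilde{F}_{X,y,N}(s)]=\frac{1}{N}\sum_{1\le i\neq j\le N}\mathrm{P}(|X_i-X_j|\le s/y_N)$. Since $X_1\equiv 0$ is deterministic, the pairs involving index $1$ contribute a negligible $O(1/N)$ term (there are $2(N-1)$ such ordered pairs, each contributing a probability $\le 1$, but after dividing by $N$ this is $O(1)$ — so one must be slightly careful here, and in fact the $i=1$ or $j=1$ pairs should be handled separately and shown to contribute $o(1)$ after division by $N$; since $X_j$ for $j\in I_m$ is uniform on a grid of mesh $2^{-(m+a(m))}$, the probability $\mathrm{P}(|X_j|\le s/y_N)$ is roughly $2s/y_N$, and summing over $j\le N$ gives $O(s)$, which divided by $N$ is $o(1)$, so these pairs are indeed negligible).

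The heart of the matter is the pairs $i\neq j$ with $2\le i,j\le N$. Here I would split into two cases. First, if $i,j$ lie in the same block $I_m$, then $X_i,X_j$ are i.i.d.\ uniform on $A_m$, so $X_i-X_j \bmod 1$ is distributed as a sum/difference on the cyclic group $\mathbb{Z}/2^{m+a(m)}\mathbb{Z}$, and $\mathrm{P}(|X_i-X_j|\le s/y_N)$ equals the number of grid points within distance $s/y_N$ of $0$, divided by $2^{m+a(m)}$, which is $\big(2\lfloor s\,2^{m+a(m)}/y_N\rfloor+1\big)/2^{m+a(m)} = 2s/y_N + O(2^{-(m+a(m))})$. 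Second, if $i\in I_m$ and $j\in I_{m'}$ with $m<m'$, then $X_j$ is uniform on the finer grid $A_{m'}$, which refines $A_m$, so conditionally on $X_i$ the difference $X_i-X_j$ is uniform on the grid $A_{m'}$ shifted by $X_i\in A_m\subseteq A_{m'}$, hence again uniform on $A_{m'}$; therefore $\mathrm{P}(|X_i-X_j|\le s/y_N)=2s/y_N+O(2^{-(m'+a(m'))})$, independently of the value of $X_i$. In all cases the probability is $2s/y_N$ plus an error that is $O(2^{-(\max(m,m')+a(\max(m,m')))})\le O(2^{-\max(m,m')})$.

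Summing over all ordered pairs $2\le i\neq j\le N$ gives a main term of $(N^2-N-O(N))\cdot \frac{2s}{y_N}$ and an error term bounded by $\sum_{1\le i\neq j\le N} O(2^{-\max(m(i),m(j))})$ where $m(i)$ denotes the block index of $i$. For the error, note that for each value $M$, the number of ordered pairs $(i,j)\le N$ with $\max(m(i),m(j))=M$ is at most $2\cdot |I_M|\cdot 2^M = O(4^M)$, and multiplying by $2^{-M}$ and summing over $M$ up to $\log_2 N$ gives $O(\sum_{M\le \log_2 N} 2^M)=O(N)$; after dividing by $N$ this error contributes $O(1)$ — so one needs the sharper bound $O(2^{-(M+a(M))})$ and the fact that $a(M)\to\infty$: then the error is $O\big(\sum_{M\le\log_2 N}2^M 2^{-a(M)}\big)=o(N)$ by Cesàro/dominated-convergence-type reasoning (since $2^{-a(M)}\to 0$, the average of $2^M 2^{-a(M)}$ weighted appropriately is $o(N)$; more precisely, split the sum at $M\le M_0$ and $M>M_0$: the tail contributes $\le \varepsilon\sum_{M\le\log_2 N}2^M=O(\varepsilon N)$ and the head is $O(2^{M_0})=o(N)$). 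Dividing everything by $N$: the main term becomes $\frac{N-1-O(1)}{y_N}\cdot 2s = \frac{N}{y_N}\cdot 2s + o(1) = 2s+o(1)$, using $y_N=N+o(N)$ so $N/y_N\to 1$; and the error term is $o(1)$.

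The main obstacle is the bookkeeping of the error term: one must use the hypothesis $\lim_n a(n)=\infty$ crucially (a bounded $a$ would make the error $\Theta(N)$ and the claim false), and carry out the two-case analysis of $\mathrm{P}(|X_i-X_j|\le s/y_N)$ — in particular the cross-block case, where the key structural fact is that the grids $A_m$ are \emph{nested} ($A_m\subseteq A_{m'}$ for $m\le m'$), which is exactly why the denominators were chosen to be powers of $2$ with increasing exponents $m+a(m)$. Everything else is routine counting of lattice points and linearity of expectation.
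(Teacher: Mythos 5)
Your proposal is correct and follows essentially the same route as the paper: linearity of expectation, the observation that the nested dyadic grids $A_m\subseteq A_{m'}$ (from the weak monotonicity of $m+a(m)$) make $X_i-X_j$ uniform on the finer grid, the lattice-point count $\mathrm{P}(|X_i-X_j|\le s/y_N)=\bigl(2\lfloor 2^{k+a(k)}s/y_N\rfloor+1\bigr)/2^{k+a(k)}=2s/y_N+O(2^{-(k+a(k))})$, and the fact that $a(k)\to\infty$ forces the accumulated error $\sum_k 2^k 2^{-a(k)}$ to be $o(N)$. The only difference is bookkeeping: you bound a main term plus error grouped by $\max$ of the block indices, while the paper evaluates the block sums $S_1,S_2$ exactly; the content is identical.
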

\begin{proof}
To start, we have
\begin{displaymath}
\begin{split}
\mathbb{E}[\tilde{F}_{X,y,N}(s)]&=\frac{2}{N}\int_\Omega \#\left\{1\le i<j\le N: \,|X_i(\omega)-X_j(\omega)| \le \frac{s}{y_N}\right\}\,\mathrm{P}(\mathrm{d}\omega)\\
&=\frac{2}{N}\int_\Omega \, \left(\sum_{1\le i<j\le N} \bm{1}_{\left[-\frac{s}{y_N},\frac{s}{y_N}\right]}(X_i(\omega)-X_j(\omega))\right)\,\mathrm{P}(\mathrm{d}\omega)\\
&=\frac{2}{N}\sum_{1\le i<j\le N} \mathbb{E}\left[\bm{1}_{\left[-\frac{s}{y_N},\frac{s}{y_N}\right]}(X_i-X_j)\right].
\end{split}
\end{displaymath}
Note that, if $i<j$ then by the independence assumption $X_i-X_j$ has the same distribution as $X_j$, so that
\begin{displaymath}
\begin{split}
\mathbb{E}[\tilde{F}_{X,y,N}(s)]&=\frac{2}{N}\sum_{1\le i<j\le N}\mathrm{P}\left(X_j \in \left[-\frac{s}{y_N},\frac{s}{y_N}\right]\right)\\
&=\frac{2}{N}\sum_{1\le j\le N}(j-1)\,\mathrm{P}\left(X_j \in \left[-\frac{s}{y_N},\frac{s}{y_N}\right]\right).
\end{split}
\end{displaymath}
If $j \in I_k$, for some $k$ sufficiently large, let us say $k\ge k_0$, we have
\begin{equation}\label{eq:lambdak}
\gamma_k:=\mathrm{P}\left(X_j \in \left[-\frac{s}{y_N},\frac{s}{y_N}\right]\right)=\frac{2\lfloor 2^{k+a(k)}s/y_N \rfloor+1}{2^{k+a(k)}}.
\end{equation}
Hence, setting $m:=\lfloor \log_2(N)\rfloor$, we obtain
\begin{displaymath}
\begin{split}
\mathbb{E}[\tilde{F}_{X,y,N}(s)]&=\frac{2}{N}\left(O(1)+\sum_{1\le k\le m}\sum_{j \in I_k}(j-1)\,\gamma_k+\sum_{2^m+1\le j\le N}(j-1)\,\gamma_{m+1}\right),
\end{split}
\end{displaymath}
where the last sum is $0$ if $N=2^m$. Considering that $\sum_{j \in I_k}(j-1)=2^{k-2}(3\cdot 2^{k-1}-1)$ for all $k \in \NNb^+$, we get
\begin{equation}\label{eq:s1s2_estimate}
\begin{split}
\mathbb{E}[\tilde{F}_{X,y,N}(s)]
&=o(1)+\frac{2}{N}\left(\sum_{1\le k\le m}2^{k-2}\gamma_k(3\cdot 2^{k-1}-1)+\gamma_{m+1}\sum_{2^m\le j\le N-1}j\,\right)\\
&=o(1)+\frac{2}{N}(S_1+S_2),
\end{split}
\end{equation}
where $S_1:=\sum_{1\le k\le m}2^{k-2}\gamma_k(3\cdot 2^{k-1}-1)$ and $S_2:=\gamma_{m+1}\sum_{2^m\le j\le N-1}j$. 

At this point, the first sum can be rewritten as 
\begin{displaymath}
\begin{split}
S_1&=\sum_{1\le k\le m}(3\cdot 2^{k-1}-1)\cdot \frac{2\lfloor 2^{k+a(k)}s/y_N \rfloor+1}{2^{a(k)+2}}\\
&=3\sum_{1\le k\le m}2^k\cdot \frac{2\lfloor 2^{k+a(k)}s/y_N \rfloor+1}{2^{a(k)+3}}-\sum_{1\le k\le m}\frac{2\lfloor 2^{k+a(k)}s/y_N \rfloor+1}{2^{a(k)+2}}\\
&=\frac{3s}{y_N}\sum_{1\le k\le m}2^{2k-2}+O\left(\sum_{1\le k\le m}\frac{2^{k}}{2^{a(k)}}\right)+O\left(\sum_{1\le k\le m}\frac{2^{k}}{N}\right)\\
&=\frac{s}{y_N}(4^m-1)+O\left(\sum_{1\le k\le m}\frac{2^{k}}{2^{a(k)}}\right)+O\left(1\right)\\
&=s\cdot \frac{4^m}{y_N}+o\left(N\right)+O(1),
\end{split}
\end{displaymath}
where the last $o(N)$ follows by the fact that $\sum_{1\le k\le m}\frac{2^{k}}{2^{m}}\cdot \frac{1}{2^{a(k)}}\to 0$ as $m\to \infty$ (indeed if $(z_n)$ is a real sequence which is convergent to $0$ then $\left(\sum_{1\le i\le n}\frac{z_{n+1-i}}{2^i}\right)$ is convergent to $0$ as well). 
Hence
\begin{equation}\label{eq:s1_estimate}
S_1=s\cdot \frac{4^m}{y_N}+o(N).
\end{equation}

Similarly, we have
\begin{displaymath}
\begin{split}
S_2&
=\frac{2\lfloor 2^{m+1+a(m+1)}s/y_N \rfloor+1}{2^{m+1+a(m+1)}}\sum_{2^m\le j\le N-1}j\\
&=\left(\frac{2s}{y_N}+o\left(\frac{1}{N}\right)\right)\left(\binom{N}{2}-\binom{2^m}{2}\right)\\
&=\frac{2s}{y_N}\left(\binom{N}{2}-\binom{2^m}{2}\right)+o(N),
\end{split}
\end{displaymath}
which implies that
\begin{equation}\label{eq:s2_estimate}
S_2=\frac{s}{y_N}(N^2-4^m)+o(N).
\end{equation}

Putting together \eqref{eq:s1s2_estimate}, \eqref{eq:s1_estimate}, and \eqref{eq:s2_estimate}, and recalling that $y_n=n+o(n)$ by hypothesis, 
we obtain that 
\begin{displaymath}
\begin{split}
\mathbb{E}[\tilde{F}_{X,y,N}(s)]&=\frac{2s}{N}\left(\frac{4^m}{y_N}+\frac{N^2-4^m}{y_N}+o(N)\right)+o\left(1\right)\\
&=2s\cdot \frac{N}{y_N}+o(1)=2s+o(1),
\end{split}
\end{displaymath}
which concludes the proof.
\end{proof}


\begin{claim}\label{claim:variance} 
\emph{\textup{(}Bounding the variances of random components.\textup{)}} 
Fix $s \in \RRb^+$. Then 
$$\mathrm{Var}[\tilde{F}_{X,y,N}(s)]\ll 1/N,\,\,\,\,\,\,\text{ as }N\to \infty.$$
\end{claim}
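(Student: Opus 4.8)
The plan is to bound the variance by expanding $\mathrm{Var}[\tilde F_{X,y,N}(s)]$ as a sum of covariances of the indicator terms appearing in Claim \ref{claim:expectedvalue}, and to exploit the joint independence of the $X_i$ to kill most of these covariances. Writing $\xi_{ij}:=\bm 1_{[-s/y_N,\,s/y_N]}(X_i-X_j)$ for $1\le i<j\le N$, we have $\tilde F_{X,y,N}(s)=\frac{2}{N}\sum_{i<j}\xi_{ij}$, so
\begin{displaymath}
\mathrm{Var}[\tilde F_{X,y,N}(s)]=\frac{4}{N^2}\sum_{i<j}\sum_{k<\ell}\mathrm{Cov}(\xi_{ij},\xi_{k\ell}).
\end{displaymath}
First I would observe that $\mathrm{Cov}(\xi_{ij},\xi_{k\ell})=0$ whenever the index pairs $\{i,j\}$ and $\{k,\ell\}$ are disjoint, since then $\xi_{ij}$ and $\xi_{k\ell}$ are functions of disjoint (hence independent) blocks of the $X$'s. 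So only two kinds of terms survive: the diagonal terms with $\{i,j\}=\{k,\ell\}$ (there are $\binom N2=O(N^2)$ of them, each contributing a covariance bounded by $\mathrm{Var}(\xi_{ij})\le \mathrm{E}[\xi_{ij}]$), and the ``one shared index'' terms, where $|\{i,j\}\cap\{k,\ell\}|=1$ (there are $O(N^3)$ such ordered configurations).

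The diagonal contribution is $\frac{4}{N^2}\sum_{i<j}\mathrm{Var}(\xi_{ij})\le \frac{4}{N^2}\sum_{i<j}\mathrm{E}[\xi_{ij}]=\frac{2}{N}\mathrm{E}[\tilde F_{X,y,N}(s)]$, which is $O(1/N)$ by Claim \ref{claim:expectedvalue}. The crux is therefore the shared-index terms: for indices of the form $\{i,j\}$ and $\{i,\ell\}$ with $j\ne\ell$ (all three distinct), I want to show $\mathrm{Cov}(\xi_{ij},\xi_{i\ell})$ is small, ideally $O(2^{-a(k)})$ or $O(1/N)$-type, so that after multiplying by $O(N^3)$ and dividing by $N^2$ the total is still $O(1/N)$ — or at least $o(1)$; here I expect one actually gets a clean bound. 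The key point is that conditionally on the value of $X_i$, the two indicators $\xi_{ij}$ and $\xi_{i\ell}$ become \emph{independent} (they are then functions of $X_j$ and $X_\ell$ respectively, which are independent of each other and of $X_i$). Hence $\mathrm{Cov}(\xi_{ij},\xi_{i\ell})=\mathrm{Var}\big(\mathrm{E}[\xi_{ij}\mid X_i]\big)$. Now $\mathrm{E}[\xi_{ij}\mid X_i=x]=\mathrm{P}(X_j\in[x-s/y_N,x+s/y_N])$, and by the same lattice count as in \eqref{eq:lambdak} this probability equals $\frac{2\lfloor 2^{k+a(k)}s/y_N\rfloor+1}{2^{k+a(k)}}$ up to an error caused by wrap-around on the torus and by $x$ landing within $s/y_N$ of a lattice point of $A_m$ — an error which is $O(2^{-(k+a(k))})\cdot 2^{k+a(k)} = O(1)$ fraction only on an exceptional set of $x$ of measure $O(s/y_N)=O(1/N)$. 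Consequently the conditional expectation is constant $=\gamma_k$ off a set of probability $O(1/N)$, on which set it is still $\le \mathrm{P}(X_j\in \text{interval of length }2s/y_N+2^{-(k+a(k))})=O(1/N)$ as well; so its variance is $O(1/N^2)$, or after a cruder bound $O(1/N)$.

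Putting the pieces together: the shared-index contribution is at most $\frac{4}{N^2}\cdot O(N^3)\cdot \max_k \mathrm{Var}(\mathrm{E}[\xi_{ij}\mid X_i])$. With the $O(1/N^2)$ bound on each covariance this is $O(N\cdot 1/N^2)=O(1/N)$; even with only $O(1/N)$ per covariance one would need to be a touch more careful and sum $\sum_i (i-1)\cdot(\text{stuff})$ rather than crudely bounding by $N^3$, matching the weighting in Claim \ref{claim:expectedvalue}, but the $N/y_N\to1$ normalization absorbs it. The main obstacle I anticipate is the bookkeeping of the torus wrap-around and the lattice-alignment exceptional set in estimating $\mathrm{E}[\xi_{ij}\mid X_i]$ — making sure the "bad" set of values of $X_i$ genuinely has probability $O(1/N)$ and that on it the conditional expectation is still $O(1/N)$, so that its variance is small enough. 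Everything else is a routine covariance expansion plus the independence-kills-most-terms principle together with the already-proved Claim \ref{claim:expectedvalue}.
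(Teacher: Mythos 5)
Your overall architecture --- expand the variance into covariances of the indicators $\chi_{ij}:=\bm{1}_{[-s/y_N,s/y_N]}(X_i-X_j)$, kill the disjoint-index pairs by independence, and dispose of the diagonal via $\mathrm{Var}(\chi_{ij})\le \mathbb{E}[\chi_{ij}]$ together with Claim \ref{claim:expectedvalue} --- is the same as the paper's, and your diagonal bound is cleaner than the paper's computation with $S_3$. The paper simply asserts that $\chi_{i_1j_1}$ and $\chi_{i_2j_2}$ are independent for \emph{all} distinct pairs and never revisits the shared-index terms, so your instinct that these need an argument is sound. In fact independence does hold, beyond the disjoint case, whenever the shared index is the \emph{smaller} element of both pairs (and also when it is the larger of one and the smaller of the other): since $A_{m_i}\subseteq A_{m_j}$ for $i\le j$, the quantity $\mathrm{P}(X_j\in[x-s/y_N,x+s/y_N])$ is \emph{exactly} constant in $x\in A_{m_i}$ by translation-invariance of the lattice, so the conditional law given the shared variable is a fixed product measure. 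The only genuinely correlated configurations are two pairs sharing their \emph{larger} index $j$, with distinct smaller indices $i_1,i_2<j$; your proposal does not separate these cases.

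It is on exactly those terms that your argument has a real gap. Conditioning on the shared $X_j=z$, the relevant function is $f_i(z):=\mathrm{P}(X_i\in[z-s/y_N,z+s/y_N])$ with $i<j$, so $X_i$ lives on the \emph{coarser} grid of spacing $\delta_i=2^{-(k+a(k))}$, $i\in I_k$. Your claim that $f_i$ equals $\gamma$ off an exceptional set of measure $O(1/N)$ \emph{on which it is still $O(1/N)$}, hence has variance $O(1/N^2)$, fails: for a fixed small $i$ the spacing $\delta_i$ is a constant much larger than $s/y_N$, the function $f_i$ takes the values $0$ and $\delta_i$, and equals the \emph{constant} $\delta_i$ on a set of measure $\asymp s/y_N$, so $\mathrm{Var}(f_i)\asymp \delta_i s/N=\Theta(1/N)$, not $O(1/N^2)$. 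Your fallback ($O(1/N)$ per covariance times $O(N^3)$ configurations over $N^2$) gives only $O(1)$, and "matching the weighting of Claim \ref{claim:expectedvalue}" is not the mechanism that rescues it. What does close the argument is the uniform bound $\mathrm{Var}(f_i)\ll \min(\delta_i^2,\ s\delta_i/N)\ll s\delta_i/N$, Cauchy--Schwarz in the form $|\mathrm{Cov}(\chi_{i_1j},\chi_{i_2j})|\le\sqrt{\mathrm{Var}(f_{i_1})\,\mathrm{Var}(f_{i_2})}\ll \frac{s}{N}\sqrt{\delta_{i_1}\delta_{i_2}}$, and then the geometric decay
$$
\sum_{i\le N}\sqrt{\delta_i}\ \ll\ \sum_{k\le m+1}2^{k-1}\cdot 2^{-k/2}\ \ll\ \sqrt{N},
$$
so that the shared-larger-index contribution is $\ll \frac{1}{N^2}\cdot N\cdot\frac{s}{N}\bigl(\sum_{i}\sqrt{\delta_i}\bigr)^2\ll 1/N$. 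Without some such summation over the coarse grids the claim is not proved; with it, your outline (which is more candid than the paper about where the difficulty sits) does go through.
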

\begin{proof}
Note that, since $(X_n)$ is a sequence of jointly independent random variables, then also the measurable transformations $\bm{1}_{\left[-\frac{s}{y_N},\frac{s}{y_N}\right]}(X_{i_1}-X_{j_1})$ and $\bm{1}_{\left[-\frac{s}{y_N},\frac{s}{y_N}\right]}(X_{i_2}-X_{j_2})$ are independent for all $i_1<j_1$ and $i_2<j_2$ such that $(i_1,j_1) \neq (i_2,j_2)$, cf. e.g. \cite[Corollary 272L]{MR2462280}.  Considering, as in the proof of Claim \ref{claim:expectedvalue}, that $X_i-X_j$ has the same distribution as $X_j$ whenever $i<j$, we obtain that
\begin{displaymath}
\begin{split}
\mathrm{Var}[\tilde{F}_{X,y,N}(s)]&=\frac{4}{N^2}\mathrm{Var}\left(\sum_{1\le i<j\le N}\bm{1}_{\left[-\frac{s}{y_N},\frac{s}{y_N}\right]}(X_{i}-X_{j})\right) \\
&\ll \frac{1}{N^2}\sum_{1\le i<j\le N}\mathrm{Var}\left(\bm{1}_{\left[-\frac{s}{y_N},\frac{s}{y_N}\right]}(X_{i}-X_{j})\right) \\
&= \frac{1}{N^2}\sum_{1\le j\le N}(j-1)\mathrm{Var}\left(\bm{1}_{\left[-\frac{s}{y_N},\frac{s}{y_N}\right]}(X_{j})\right)\\
&= \frac{1}{N^2}\left(O(1)+\sum_{1\le k\le m}\sum_{1\le j\le I_k}(j-1)\gamma_k(1-\gamma_k)\right.\\
& \text{ } \hspace{50mm}\left. +\sum_{2^m+1\le j\le N}(j-1)\gamma_{m+1}(1-\gamma_{m+1})\right),
\end{split}
\end{displaymath}
where the last $O(1)$ comes the fact the formula \eqref{eq:lambdak} holds for all but finitely many $k$.  
Hence, with the notation of Claim \ref{claim:expectedvalue} and recalling \eqref{eq:s1_estimate} and \eqref{eq:s2_estimate}, we have that  
\begin{displaymath}
\begin{split}
\mathrm{Var}[\tilde{F}_{X,y,N}(s)] &\ll \frac{1}{N^2}\left(O(1)+ S_1+S_2+S_3 \right)\\
&\ll \frac{1}{N^2}\left(o(N)+\frac{s4^m}{y_N}+\frac{s(N^2-4^m)}{y_N}+S_3\right)\ll \frac{1}{N}+\frac{1}{N^2}\cdot S_3,
\end{split}
\end{displaymath}
where $S_3:=\sum_{1\le k\le m+1}\sum_{j\in I_k}(j-1)\gamma_k^2$. 

To conclude, recalling \eqref{eq:lambdak}, we get
\begin{displaymath}
\begin{split}
S_3&=\sum_{1\le k\le m+1} 2^{k-2}(3\cdot 2^{k-1}-1)\left(\frac{2\lfloor 2^{k+a(k)}s/y_N \rfloor+1}{2^{k+a(k)}}\right)^2\\
&\le \sum_{1\le k\le m+1} 2^{k-2}\cdot 2^{k+1}\cdot \left(\frac{2\cdot ( 2^{k+a(k)}s/y_N )+1}{2^{k+a(k)}}\right)^2\\
&\le \sum_{1\le k\le m+1} \frac{2^k}{2^{2a(k)}}\cdot \left( 4 (2^{k+a(k)}s/y_N)^2)+ 4\cdot 2^{k+a(k)}s/y_N+1\right)\\
&\ll \frac{1}{N^2}\sum_{1\le k\le m+1} 2^{3k}+\frac{1}{N}\sum_{1\le k\le m+1}\frac{2^{2k}}{2^{a(k)}} + \sum_{1\le k\le m+1}\frac{2^k}{2^{2a(k)}}\\
&\le \frac{1}{N^2}\sum_{1\le k\le m+1} 2^{3k}+\frac{1}{N}\sum_{1\le k\le m+1}2^{2k} + \sum_{1\le k\le m+1}2^k\\
&\ll N+N+N.
\end{split}
\end{displaymath}
Therefore $\mathrm{Var}[\tilde{F}_{X,y,N}(s)]\ll \frac{1}{N}+\frac{1}{N^2}\cdot S^3 \ll \frac{1}{N}$, which completes the proof.
\end{proof}


\begin{claim}\label{claim:almostthetrueone} 
\emph{\textup{(}PPC of random components along subsequences, with $s$ fixed.\textup{)}} 
Fix $s \in \RRb^+$. 
Then
$$
\mathrm{P}\left(\left\{\omega \in \Omega: \lim_{N\to \infty}\tilde{F}_{X(\omega),y, N^{2}}=2s\right\}\right)=1.
$$
\end{claim}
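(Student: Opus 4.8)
The plan is to combine the first- and second-moment estimates from Claims~\ref{claim:expectedvalue} and~\ref{claim:variance} with the Borel--Cantelli lemma, exploiting the fact that sampling along the squares $N^2$ turns the variance bound into a summable series. Concretely, fix $s \in \RRb^+$ and write $Z_N := \tilde{F}_{X,y,N^2}(s)$ for brevity. Claim~\ref{claim:expectedvalue} gives $\mathbb{E}[Z_N] = 2s + o(1)$ as $N \to \infty$, while Claim~\ref{claim:variance} (applied with $N^2$ in place of $N$) gives $\mathrm{Var}[Z_N] \ll 1/N^2$.

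First I would apply Chebyshev's inequality: for every $\varepsilon > 0$,
$$
\mathrm{P}\big(|Z_N - \mathbb{E}[Z_N]| > \varepsilon\big) \le \frac{\mathrm{Var}[Z_N]}{\varepsilon^2} \ll \frac{1}{\varepsilon^2 N^2}.
$$
Since $\sum_N 1/N^2 < \infty$, the Borel--Cantelli lemma yields that, almost surely, $|Z_N - \mathbb{E}[Z_N]| \le \varepsilon$ for all sufficiently large $N$; combined with $\mathbb{E}[Z_N] \to 2s$ this gives $\limsup_{N\to\infty} |Z_N - 2s| \le \varepsilon$ almost surely. Applying this along the countable family $\varepsilon \in \{1/k : k \in \NNb^+\}$ and intersecting the corresponding full-measure events, we conclude that $Z_N \to 2s$ almost surely, which is exactly the assertion of the claim.

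There is essentially no serious obstacle here: the only point to watch is that one must pass to the subsequence $N^2$ before invoking Borel--Cantelli, since the bound $\mathrm{Var}[\tilde{F}_{X,y,N}(s)] \ll 1/N$ from Claim~\ref{claim:variance} is not summable over $N$ itself. (This is also the reason the statement is phrased along $N^2$ rather than along all integers; a finer argument, or a maximal inequality bridging consecutive squares, would be needed to upgrade convergence to the full sequence, but this is not required for the present claim.) The remaining steps --- Chebyshev, Borel--Cantelli, and the countable intersection over $\varepsilon = 1/k$ --- are entirely routine.
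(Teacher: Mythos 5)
Your proposal is correct and follows essentially the same route as the paper: Chebyshev's inequality applied to $\tilde{F}_{X,y,N^2}(s)$ using the expectation and variance estimates from Claims~\ref{claim:expectedvalue} and~\ref{claim:variance}, the summability of $1/N^2$ feeding into the first Borel--Cantelli lemma, and a countable intersection over $\varepsilon = 1/k$ to conclude almost sure convergence. The only cosmetic difference is that the paper centers $Z_N$ at $2s$ rather than at $\mathbb{E}[Z_N]$, which is handled by the same triangle-inequality step you describe.
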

\begin{proof}
For each $N \in \NNb^+$, define the random variable 
$$
Z_N:\Omega \to \RRb: \omega \mapsto \tilde{F}_{X(\omega),y, N^{2}}-2s.
$$
It follows by Claims \ref{claim:expectedvalue} and \ref{claim:variance} that $\mathbb{E}[Z_N]=o(1)$ and $\mathrm{Var}[Z_N] \ll 1/N^2$ (indeed, note that the index $N^2$ in the definition of $Z_N$ above). 
%
Our thesis can be rewritten as $Z_N \to 0$ almost surely. Since $P$ is countably additive, equivalently
$$
\forall \varepsilon>0, \,\,\,\mathrm{P}\left(\left\{\omega \in \Omega: |Z_N(\omega)|>\varepsilon \text{ for infinitely many }N \in \NNb^+\right\}\right)=0.
$$

Let us fix $\varepsilon>0$. The above condition can be rewritten as $\mathrm{P}(\limsup_{N\to \infty}Q_N)=0$, where $Q_N:=\{\omega \in \Omega: |Z_N(\omega)|>\varepsilon\}$ for all $N\in \NNb^+$. 
Note that there exists $n_0$ such that 
$|\mathbb{E}[Z_n]|<\varepsilon/2$ for all $n\ge n_0$, which implies that
$$
\mathrm{P}(Q_N) \le \mathrm{P}\left(\left\{\omega \in \Omega: |Z_n(\omega)-\mathbb{E}[Z_n]|>\varepsilon/2\right\}\right)
$$
for all $n\ge n_0$. Therefore, by Chebyshev's inequality
$$
\mathrm{P}(Q_N) \ll 
\mathrm{Var}[Z_N]
\ll 1/N^2.
$$
It follows that $\sum_{N \in \NNb^+} \mathrm{P}(Q_N)<\infty$, hence the conclusion follows by the first Borel--Cantelli lemma.
\end{proof}


\begin{claim}\label{claim:ppc_sfixed} 
\emph{\textup{(}PPC of random components along full sequence, with $s$ fixed.\textup{)}} 
Fix $s \in \RRb^+$. 
Then
\begin{equation}\label{eq:claimsfixed}
\mathrm{P}\left(\left\{\omega \in \Omega: \lim_{N\to \infty}F_{X(\omega),N}=2s\right\}\right)=1.
\end{equation}
\end{claim}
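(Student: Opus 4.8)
The plan is to bootstrap from Claim~\ref{claim:almostthetrueone}, which already gives almost-sure convergence $F_{X(\omega),y,N^2}(s)\to 2s$ along the subsequence of perfect squares, to convergence along the full sequence. The point is that the "missing" values $N$ with $N^2\le N<(N+1)^2$ are sandwiched between two consecutive squares, and the pair-correlation counting function is monotone in a way that lets a square-sandwiching argument go through, at the cost of slightly perturbing the parameter $s$. This is precisely why the auxiliary function $\tilde F_{x,y,N}$ with the flexible normalization $y_N=N+o(N)$ was introduced: it absorbs the discrepancy between $M$ and the nearest square.

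Here is the sketch. First I would fix $s>0$ and a small rational $\varepsilon>0$. For each large integer $M$, let $N=N(M):=\lfloor\sqrt{M}\rfloor$, so that $N^2\le M<(N+1)^2$ and in particular $M=N^2+O(N)=N^2(1+o(1))$. The crucial elementary observation is a monotonicity-and-comparison inequality: since enlarging the counted set only increases the count, for $M$ in this range one has, for suitable comparison parameters,
\begin{displaymath}
\frac{N^2}{M}\,\tilde F_{X(\omega),y,N^2}(s-\varepsilon)\;\le\; F_{X(\omega),M}(s)\;\le\;\frac{(N+1)^2}{M}\,\tilde F_{X(\omega),y,(N+1)^2}(s+\varepsilon),
\end{displaymath}
where $y$ is chosen as a fixed sequence with $y_{N^2}$ comparable to $M$ for all $M$ with $\lfloor\sqrt M\rfloor=N$; concretely one can take $y$ so that the normalization $s/y_{N^2}$ used inside $\tilde F$ at index $N^2$ equals (up to the slack absorbed by $\varepsilon$) the genuine threshold $s/M$ one needs. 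The coefficients $N^2/M$ and $(N+1)^2/M$ both tend to $1$ as $M\to\infty$.

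Next I would invoke Claim~\ref{claim:almostthetrueone} twice, once at parameter $s-\varepsilon$ and once at $s+\varepsilon$: outside a null set $\mathscr N_\varepsilon$, we have $\tilde F_{X(\omega),y,N^2}(s-\varepsilon)\to 2(s-\varepsilon)$ and $\tilde F_{X(\omega),y,N^2}(s+\varepsilon)\to 2(s+\varepsilon)$ as $N\to\infty$. Feeding this into the sandwich and letting $M\to\infty$ gives, for every $\omega\notin\mathscr N_\varepsilon$,
\begin{displaymath}
2(s-\varepsilon)\le \liminf_{M\to\infty}F_{X(\omega),M}(s)\le \limsup_{M\to\infty}F_{X(\omega),M}(s)\le 2(s+\varepsilon).
\end{displaymath}
Finally, take the union of $\mathscr N_\varepsilon$ over a sequence $\varepsilon_k\downarrow 0$ of rationals; this is still a null set, and off it we get $\lim_{M\to\infty}F_{X(\omega),M}(s)=2s$, which is exactly \eqref{eq:claimsfixed}.

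The main obstacle, and the only place requiring genuine care, is setting up the comparison inequality so that the same fixed sequence $y$ (independent of $M$ and of $\varepsilon$) works simultaneously for the lower bound at $N^2$ and the upper bound at $(N+1)^2$ while correctly interpolating the true threshold $s/M$ for all $M$ between consecutive squares. One must check that the error in replacing $s/M$ by $s/y_{N^2}$ (respectively $s/y_{(N+1)^2}$) is genuinely of order $o(1/M)\cdot$(count), so that it is swallowed by the $\pm\varepsilon$ slack and the $N^2/M\to 1$ factor, rather than contributing a term that survives in the limit; this is where the hypothesis $y_n=n+o(n)$ is used essentially. Everything else — monotonicity of the counting function, the two applications of Claim~\ref{claim:almostthetrueone}, and the countable union over $\varepsilon_k$ — is routine.
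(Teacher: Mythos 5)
Your proposal is correct and follows essentially the same route as the paper: sandwich $F_{X(\omega),M}(s)$ between the counting functions at the consecutive squares $N^2\le M<(N+1)^2$, apply Claim~\ref{claim:almostthetrueone} to both ends, and use that the ratio of indices tends to $1$. The only (inessential) difference is how the mismatch between the threshold $s/M$ and the thresholds at the neighboring squares is absorbed: you perturb the parameter to $s\pm\varepsilon$ and finish with a countable union over $\varepsilon_k\downarrow 0$, whereas the paper keeps $s$ fixed and instead invokes Claim~\ref{claim:almostthetrueone} for the two auxiliary normalizations $v_n=n+\lfloor\sqrt n\rfloor$ and $w_n=\max\{n-\lfloor\sqrt n\rfloor,1\}$ --- which is precisely why the claim was stated for general $y_n=n+o(n)$.
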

\begin{proof}
Define the sequences $v=(v_n)$ and $w=(w_n)$ by
$$
v_n:=n+\lfloor \sqrt{n}\rfloor \,\,\,\text{ and }\,\,\,w_n:=\max\{n-\lfloor \sqrt{n}\rfloor, 1\}
$$
for all $n\in \NNb^+$. Note that, thanks to Claim \ref{claim:almostthetrueone}, we have 
$\tilde{F}_{X,v, N^{2}}(s)\to 2s$ and $\tilde{F}_{X,w, N^{2}}(s)\to 2s$  
as $N\to \infty$ almost surely, let us say, for all $\omega \in \Omega_0$ with $\mathrm{P}(\Omega_0)=1$. 

To conclude the proof, for all sufficiently large $N \in \NNb^+$ and $\omega \in \Omega_0$, we have
\begin{displaymath}
\begin{split}
\frac{M^{2}}{(M+1)^{2}}\,\tilde{F}_{X(\omega),v,M^2}(s)
&\le \frac{1}{N}\#\left\{1\le i\neq j\le N: \,|X_i(\omega)-X_j(\omega)| \le \frac{s}{v_{M^{2}}}\right\}\\
&\le F_{X(\omega),N}(s)\\
&\le \frac{1}{N}\#\left\{1\le i\neq j\le N: \,|X_i(\omega)-X_j(\omega)| \le \frac{s}{w_{(M+1)^{2}}}\right\}\\
&\le \frac{(M+1)^{2}}{M^{2}}\,\tilde{F}_{X(\omega),w,(M+1)^{2}}(s),
\end{split}
\end{displaymath}
where $M:=\lfloor \sqrt{N}\rfloor$. Taking the limit as $N\to \infty$, we deduce that $F_{X(\omega),N}(s) \to 2s$ for all $\omega \in \Omega_0$. 
\end{proof}


Finally, we obtain that $(X_n(\omega))$ has Poissonian pair correlations almost surely.
\begin{claim}\label{claim:ppc_almostsurelyrandom} 
\emph{\textup{(}PPC of random components along full sequence.\textup{)}} 
We have
\begin{equation}\label{eq:PPCas}
\mathrm{P}\left(\left\{\omega \in \Omega: \forall s \in \RRb^+,\,\,\,\,\lim_{N\to \infty}F_{X(\omega),N}=2s\right\}\right)=1.
\end{equation}
\end{claim}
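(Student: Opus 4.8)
The plan is to upgrade the "$s$ fixed" statement of Claim \ref{claim:ppc_sfixed} to a statement that holds simultaneously for all $s \in \RRb^+$. The obvious difficulty is that \eqref{eq:claimsfixed} gives, for each fixed $s$, a full-measure event $\Omega_s$ on which $F_{X(\omega),N}(s)\to 2s$, but the intersection $\bigcap_{s\in\RRb^+}\Omega_s$ is over uncountably many sets and need not have full measure. So the first step is to reduce to a countable intersection. Let $\Omega_\ast:=\bigcap_{s\in\QQ^+}\Omega_s$; by countable additivity (more precisely, countable subadditivity of the complement) we have $\mathrm{P}(\Omega_\ast)=1$, and on $\Omega_\ast$ we know $F_{X(\omega),N}(s)\to 2s$ for every positive rational $s$.

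The second step is a monotonicity-and-sandwiching argument to pass from rational $s$ to arbitrary real $s$. Fix $\omega\in\Omega_\ast$ and an arbitrary $s\in\RRb^+$. For any rationals $s^-<s<s^+$ one has, directly from the definition of $F_{x,N}$, the inclusion of counting sets
\begin{displaymath}
F_{X(\omega),N}(s^-)\le F_{X(\omega),N}(s)\le F_{X(\omega),N}(s^+)
\end{displaymath}
for every $N$, simply because $\{|X_i-X_j|<s^-/N\}\subseteq\{|X_i-X_j|<s/N\}\subseteq\{|X_i-X_j|<s^+/N\}$. Taking $\liminf$ and $\limsup$ as $N\to\infty$ and using that $\omega\in\Omega_\ast$, we get
\begin{displaymath}
2s^- \le \liminf_{N\to\infty} F_{X(\omega),N}(s) \le \limsup_{N\to\infty} F_{X(\omega),N}(s) \le 2s^+.
\end{displaymath}
Now let $s^-\uparrow s$ and $s^+\downarrow s$ through the rationals; since $2s^-\to 2s$ and $2s^+\to 2s$, the $\liminf$ and $\limsup$ are both squeezed to $2s$, hence $\lim_{N\to\infty}F_{X(\omega),N}(s)=2s$. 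As $\omega\in\Omega_\ast$ and $s$ were arbitrary, this establishes that $(X_n(\omega))$ has Poissonian pair correlations for every $\omega\in\Omega_\ast$, and since $\mathrm{P}(\Omega_\ast)=1$ this is exactly \eqref{eq:PPCas}.

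The only point requiring a little care is the direction of the monotonicity together with the fact that the limiting value $2s$ is itself continuous (indeed linear) in $s$, which is what makes the sandwich collapse; this is where I expect the "main obstacle" to lie, though it is quite mild. One should also double-check that $F_{X(\omega),N}$ uses the strict inequality $|x_i-x_j|<s/N$ (as in the paper's definition) so that the set inclusions above are literally correct; no approximation of $F$ by $\tilde F$ is needed here since Claim \ref{claim:ppc_sfixed} already delivers convergence of $F_{X(\omega),N}$ itself. Thus the proof of Claim \ref{claim:ppc_almostsurelyrandom} is a short deduction from Claim \ref{claim:ppc_sfixed} via countable intersection plus a monotone squeeze over rational parameters.
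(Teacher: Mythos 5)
Your proof is correct and follows essentially the same route as the paper: the paper likewise uses the monotonicity of $s\mapsto F_{x,N}(s)$ together with countable additivity over a countable dense subset of $\RRb^+$ to upgrade Claim \ref{claim:ppc_sfixed} to all real $s$; you have simply written out the squeeze argument that the paper leaves implicit.
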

\begin{proof}
Note that, for each $N \in \NNb^+$ and sequence $x$ in $[0,1)$, the function 
$
\RRb^+\to \RRb: s\mapsto F_{x,N}(s)
$ 
is non-decreasing. This implies that a sequence $x$ has Poissonian pair correlations if and only if there exists a relatively dense set $S\subseteq \RRb^+$ such that $\lim_{N\to \infty}F_{x,N}(s)=2s$ for all $s \in S$. 
Since $\mathrm{P}$ is countably additive and $\RRb^+$ is separable, then  \eqref{eq:PPCas} follows by the fact that \eqref{eq:claimsfixed} holds for all fixed values of $s \in \RRb^+$, thanks to Claim \ref{claim:ppc_sfixed}.
\end{proof}

The random components give PPC, as desired. However, using only the random components would give too many different gap sizes. This is related to the fact that the gap distribution of the Poisson process is exponential, and that, accordingly, relatively large gap sizes are possible.

Hence, we introduce the "deterministic blocks." To this aim, let $b: \NNb^+ \to \NNb^+$ be another weakly increasing function such that $\lim_{m\to \infty}b(m)=\infty$ (that will be chosen later), and define 
\begin{equation}\label{eq:bm}
B_m:=\left\{\frac{j}{2^{b(m)}}: j=0,1,\ldots,2^{b(m)}-1\right\}\,\,\,\text{ and }\,\,\,C_m:=B_m\setminus B_{m-1}
\end{equation}
for each $m \in \NNb^+$, where by convention $B_0:=\emptyset$. 
Note that $c_m:=\# C_m=2^{b(m)}-2^{b(m-1)}$ for each $m \in \NNb^+$, where $b(0):=0$, so that $c_1+\cdots+c_m=2^{b(m)}$. 

Then, for each $m \in \NNb^+$, let $\{Y_{m,1},Y_{m,2}\ldots,Y_{m,c_m}\}$ be random variables on the same probability space $(\Omega, \mathscr{F}, \mathrm{P})$ which are Dirac measures on the values of $C_m$, and $Y_{m,1}(\omega)<\cdots<Y_{m,c_m}(\omega)$ for all $\omega \in \Omega$. 

Note that all $\{Y_{k,j}: k=1,\ldots,m, j=1,\ldots,c_k\}$ are Dirac measures on the values of $B_m$ and $c_m$ can be equal to $0$ (since the function $b$ is weakly increasing). 

Consider the sequence $(Z_n)$ of random variables where each "deterministic block" $(Y_{m,j}: j=1,\ldots,c_m)$ is inserted between the "random blocks" $(X_i: i \in I_{m-1})$ and $(X_i: i \in I_m)$, so that it starts as 
$$
X_1,\,\, Y_{1,1},\ldots,Y_{1,c_1},\,\,X_2,\,\, Y_{2,1},\ldots,Y_{2,c_2},\,\, X_3,X_4, \,\,Y_{3,1},\ldots,Y_{3,c_3},\,\,X_5,\ldots,X_8,\,\,Y_{4,1},\ldots 
$$
To be explicit, the sequence $(Z_n)$ is defined by:
\begin{enumerate}[label={\rm (\roman{*})}]
\item $Z_1=X_1$;
\item $Z_2=Y_{1,1},\ldots,Z_{2^{b(1)}+1}=Y_{1,c_1}$;
\item $Z_{2^{b(1)}+2}=X_2$;
\item $Z_{2^{b(m-1)}+2^{m-1}+j}=Y_{m,j}$ for all integers $m\ge 2$ and $j=1,\ldots,c_m$;
\item $Z_{2^{b(m)}+i}=X_i$ for all integers $m\ge 2$ and $i \in I_m$.
\end{enumerate}

To ease the notation in the rest of the proof, let $(D_m: m\ge 1)$ and $(R_m: m\ge 0)$ be the set of indexes of deterministic blocks and random ones, respectively, so that 
$R_0=\{1\}$, $D_1:=\{2,3,\ldots,2^{b(1)}+1\}$, $R_1:=\{2^{b(1)}+2\}$, 
$
D_m:=\{2^{b(m-1)}+2^{m-1}+1,\ldots,2^{b(m)}+2^{m-1}\}\text{ and }R_m:=\{2^{b(m)}+2^{m-1}+1,\ldots,2^{b(m)}+2^{m}\}
$ 
for all integers $m\ge 2$ (cf. also Figure \ref{fig:dmrm}). Finally, set 
$
\mathcal{D}:=\bigcup_{t\ge 1}D_t$ and $\mathcal{R}:=\bigcup_{t\ge 0}R_t.
$

\bigskip

\begin{figure}[!htb]
\centering
\begin{tikzpicture}
[scale=1.6]

\node (xbegb) at (.7,0){};
\node (xend) at (7.5,0){};
\draw[-latex] (xbegb) -- (xend);
\draw[thin, dashed] (0.7,0)--(0,0);

\node (r0) at (.5,.2){{\small $R_{m-1}$}};

\draw[thin](1.3,-.1)--(1.3,.1);
\node (a3n3) at (1.1,-.25){{\tiny $2^{b(m-1)}+2^{m-1}$}};

\node (r0) at (1.3+.6,.2){{\small $D_{m}$}};

\draw[thin](2.5,-.1)--(2.5,.1);
\node (a3n3) at (2.7,-.25){{\tiny $2^{b(m)}+2^{m-1}$}};

\node (r0) at (2.5+.6+1.25,.2){{\small $R_{m}$}};

\draw[thin](6.2,-.1)--(6.2,.1);
\node (a3n3) at (6.2,-.25){{\tiny $2^{b(m)}+2^{m}$}};

\node (r0) at (6.8,.2){{\small $D_{m+1}$}};

\end{tikzpicture}
\caption{A deterministic block $D_m$ and a random block $R_m$.}
\label{fig:dmrm}
\end{figure}
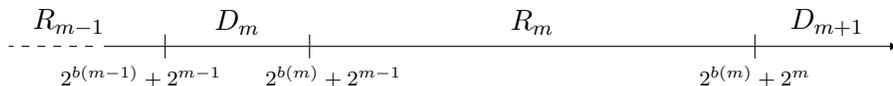

\bigskip

With these premises, we show that if the function $\tilde{b}: \NNb^+ \to \RRb$ defined by 
\begin{equation}\label{eq:btilde}
\tilde{b}(m):=m-b(m)
\end{equation}
for all $n\in \NNb^+$ is nonnegative and weakly increasing to $\infty$, then $(Z_n(\omega))$ has Poissonian pair correlation almost surely.
\begin{claim}\label{claim:ppc_almostsurelyrandomZ}  
\emph{\textup{(}PPC of random + deterministic components.\textup{)}} 
Suppose that 
the function $\tilde{b}$ defined in \eqref{eq:btilde} is weakly increasing to $\infty$ and $0\le \tilde{b}(m) \le m$ for all $m \in \NNb^+$. Then
$$
\mathrm{P}\left(\left\{\omega \in \Omega: \forall s \in \RRb^+,\,\,\,\,\lim_{N\to \infty}F_{Z(\omega),N}=2s\right\}\right)=1.
$$
\end{claim}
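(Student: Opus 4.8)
The plan is to show that inserting the deterministic blocks changes the pair-correlation count $\#\{1\le i\neq j\le N: |Z_i-Z_j|\le s/N\}$ only by $o(N)$, so that the almost-sure PPC of the random-only sequence $(X_n)$ established in Claim~\ref{claim:ppc_almostsurelyrandom} transfers to $(Z_n)$. The key quantitative fact is that, by the hypothesis $\tilde b(m)=m-b(m)\ge 0$, the number of deterministic indices inserted up to the end of block $m$ is $2^{b(m)}\le 2^m$, which is of the same order as the number $2^m$ of random indices; more importantly, the \emph{marginal} effect of a deterministic point on the pair-correlation sum is controlled because each deterministic point $Y_{m,j}$ lies on a grid $B_m$ of mesh $2^{-b(m)}$, and a window of radius $s/N$ around it meets only $O(1+2^{-b(m)}N/(s\cdot\text{something}))$... \emph{wait}, actually the cleaner estimate: a fixed point $z$ has at most $O(sN^{-1}\cdot\#(\text{grid points in }[0,1))+1)=O(s+1)$ deterministic neighbours at grid level $B_m$ when $2^{-b(m)}\gg s/N$, but near the top block $2^{b(m)}\approx$ (number of deterministic points so far) can be as large as $N$, in which case the grid is fine and a window of radius $s/N$ catches $O(sN/2^{b(m)}\cdot\ldots)$ — one must be careful. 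The robust bound I would use instead is purely cardinality-based: the total number of pairs involving at least one deterministic index among the first $N$ terms is at most $2 D(N)\cdot N$ where $D(N)$ is the number of deterministic indices among the first $N$; but that is $\Theta(N^2)$ in general, so this crude bound is \emph{not} enough and the argument must exploit the grid structure.

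So the real plan is as follows. Fix $s>0$ and work on the full-measure event $\Omega_0$ where $(X_n(\omega))$ has PPC (Claim~\ref{claim:ppc_almostsurelyrandom}). Given $N$, let $m=m(N)$ be the block index with $2^{b(m)}+2^{m-1}<N\le 2^{b(m+1)}+2^m$ (roughly), let $N_X$ be the number of random indices and $N_D$ the number of deterministic indices among the first $N$ terms of $(Z_n)$, so $N=N_X+N_D$ with $N_D=2^{b(m)}+O(\text{block size})\le N_X$ by the hypothesis $b(m)\le m$; in particular $N_X=N(1+o(1))$ is \emph{not} automatic — we need $N_D=o(N)$, which follows precisely from $\tilde b(m)\to\infty$: since $b(m)=m-\tilde b(m)$, we have $N_D\asymp 2^{b(m)}=2^m/2^{\tilde b(m)}=o(2^m)\asymp o(N_X)$, hence $N_D=o(N)$ and $N_X=N(1+o(1))$. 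First I would split the pair-correlation count for $(Z_n)$ into (a) random–random pairs, (b) deterministic–deterministic pairs, (c) mixed pairs. For (a), it is a sub-count of $\#\{i\neq j\le N_X: |X_i-X_j|\le s/N\}$ reindexed; comparing the scale $s/N$ with $s/N_X$ and using monotonicity in $s$ together with $N/N_X\to1$, the PPC of $(X_n)$ on $\Omega_0$ gives $(a)=2sN(1+o(1))$.

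The main obstacle is bounding (b) and (c) by $o(N)$. For (b): the deterministic points among the first $N$ terms all lie in some $B_M$ with $2^{b(M)}\le N_D=o(N)$; consecutive gaps between distinct points of $B_M$ are $\ge 2^{-b(M)}\ge 1/N_D$, and since $1/N_D\gg s/N$ eventually (because $N_D=o(N)$), a window of radius $s/N$ around a deterministic point contains at most the $O(1)$ deterministic points equal to it plus none others — but deterministic points \emph{can repeat} across blocks only as Dirac masses at distinct grid locations, so actually all $N_D$ deterministic values are \emph{distinct} points of $B_M$, spaced $\ge 2^{-b(M)}$ apart; hence each has at most $O(1+ (s/N)/2^{-b(M)})=O(1+sN_D/N)=O(1)$ deterministic partners, giving $(b)=O(N_D)=o(N)$. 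For (c): a mixed pair has one random point $X_i$ ($i$ in block $k\le m$, so $X_i$ is on grid $A_k$ of mesh $2^{-(k+a(k))}$, refining $B_M$) within $s/N$ of a deterministic point $Y\in B_M$; for each of the $N_D=o(N)$ deterministic points $Y$, we bound the number of random $X_i$ within $s/N$ of $Y$ by summing over blocks $k$ the quantity $|I_k|\cdot\Pr(|X_i - Y|\le s/N)$ — but we are on a fixed $\omega$, not in expectation, so instead use the following deterministic fact: the random points $X_i$ in block $k$ also lie on the grid $A_k$, and the number of grid points of $A_k$ in an interval of length $2s/N$ is $O(1+ 2^{k+a(k)}s/N)$; summing $\min(|I_k|,\, O(1+2^{k+a(k)}s/N))$ over $k\le m$ and multiplying by $N_D$ must be shown to be $o(N)$. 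Here one uses that $\sum_{k\le m} 2^{k+a(k)} \cdot (s/N) \asymp s 2^{m+a(m)}/N \asymp s2^{a(m)}$ — \emph{this is not $o(N/N_D)$ in general}, so the naive bound fails, and the correct route is to go back through expectation and variance exactly as in Claims~\ref{claim:expectedvalue}–\ref{claim:almostthetrueone} but for the mixed term: define $\tilde F^{\mathrm{mix}}_N$, compute its expectation to be $o(1)$ using that the "mass" of deterministic indices relative to random ones is $N_D/N = o(1)$ (the $2s$ main term is proportional to the product of the two densities, one of which vanishes), bound its variance by $O(1/N)$ by the same independence-of-disjoint-pairs argument (deterministic points are constants, so variances only come from the $\le N_X N_D$ mixed indicator terms, each $O(1)$, over $N^2$, giving $O(N_X N_D/N^2)=o(1/N)\cdot$... one checks $\ll 1/N$), then run the subsequence-along-$N^2$ plus Borel–Cantelli plus sandwiching argument of Claims~\ref{claim:almostthetrueone}–\ref{claim:ppc_sfixed} verbatim, and finally the relatively-dense-set-of-$s$ argument of Claim~\ref{claim:ppc_almostsurelyrandom} to get all $s$ simultaneously almost surely. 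Intersecting the finitely (countably) many full-measure events completes the proof; the one genuinely delicate point to get right is the expectation computation for the mixed term, ensuring the main term is $2s\cdot(N_X/N)\cdot(N_D/N)\cdot N = 2sN_XN_D/N = o(N)$ rather than $2sN$.
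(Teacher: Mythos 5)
Your decomposition into random--random, deterministic--deterministic and mixed pairs is exactly the paper's, and your treatment of the first two types is sound: the random--random count is handled by Claim~\ref{claim:ppc_almostsurelyrandom} together with $N_D=2^{b(m)}=2^{m-\tilde b(m)}=o(N)$, and distinct deterministic points are at distance $\ge 2^{-b(m)}\gg s/N$, so they contribute nothing for large $N$. The problem is the mixed term, which is the heart of the claim, and there your argument has a genuine gap. You propose to show $\mathbb{E}[\text{mixed count}]/N=o(1)$ ``because the main term is proportional to the product of the two densities, one of which vanishes.'' That only accounts for the term $2s\,N_XN_D/N^2$. The dominant difficulty is the discretization term: each deterministic point $y\in B_{m'}$ lies \emph{exactly on} the grid $A_k$ whenever $b(m')\le k+a(k)$, so $\mathrm{P}(|X_i-y|\le s/N)\ge \mathrm{P}(X_i=y)=2^{-(k+a(k))}$ even when $2s\cdot 2^{k+a(k)}/N<1$. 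Summing these atoms over $i\in I_k$, $k\le m$, and over the $N_D\asymp 2^{b(m)}$ deterministic points produces a contribution of order $2^{b(m)}\sum_{k\le m}2^{-a(k)}$, and since the hypotheses only give $\tilde b(m)\to\infty$ and $a(k)\to\infty$ (both possibly far slower than $\log_2 m$), the crude bound $2^{b(m)}\cdot o(m)$ is \emph{not} $o(2^m)$ in general. One can rescue the estimate by noting that a coarse grid $A_k$ can be hit by at most $\min\bigl(2^{b(m)},\,O(2^{k+a(k)})\bigr)$ of the deterministic points and then splitting the sum at $k=m-T(m)$ with $T(m)\to\infty$ chosen so that $a(m-T(m))\ge T(m)$; but none of this is in your proposal, and your stated justification does not touch it. The subsequent variance and Borel--Cantelli steps would also have to be redone for the mixed functional (they do go through, since after grouping by the random index $i$ the mixed count is a sum of independent, eventually $O(1)$-bounded variables).

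For comparison, the paper avoids expectations for the mixed term entirely and argues pathwise: if the mixed count were $\ge\delta N_k$ along a subsequence, then writing $\nu_{k,j}$ for the number of random indices within $s/N_k$ of the $j$-th deterministic point and using that these index sets are pairwise disjoint, Cauchy--Schwarz gives $\sum_j\nu_{k,j}^2\ge(\delta N_k)^2/2^{b(m_k)}\gg N_k\cdot 2^{\tilde b(m_k)}$; every off-diagonal pair counted by $\sum_j\nu_{k,j}^2$ is a pair of \emph{random} points within $2s/N_k$ of each other, so the random--random count at parameter $2s$ would be $\gg 2^{\tilde b(m_k)}N_k\to\infty\cdot N_k$, contradicting the already-established limit $4s$. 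This ``energy'' argument uses only the hypothesis $\tilde b(m)\to\infty$ and sidesteps the atom term altogether; it is the key idea your proposal is missing.
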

\begin{proof}
Thanks to Claim \ref{claim:ppc_almostsurelyrandom}, there exists $\Omega^\star\subseteq \Omega$ such that $\mathrm{P}(\Omega^\star)=1$ and
\begin{equation}\label{eq:hypothesisppc}
\forall \omega \in \Omega^\star, \forall s \in \RRb^+,\,\,\,\,\lim_{N\to \infty}F_{X(\omega),N}=2s.
\end{equation}
Hence, it is sufficient to show that, for each $\omega \in \Omega^\star$ and $s \in \RRb^+$, it holds that $\lim_{N\to \infty}F_{Z(\omega),N}=2s$ as well. Fix $\omega \in \Omega^\star$ and $s \in \RRb^+$. Note that \eqref{eq:hypothesisppc} implies that, if $(v_n)$ and $(w_n)$ are positive real sequences such that $v_n=n+o(n)$ and $w_n=n+o(n)$ as $n\to \infty$ then (we omit details)
\begin{equation}\label{eq:ppchypoequivalent}
\lim_{N\to \infty}\frac{1}{v_N}\#\left\{1\le i\neq j\le N: |X_i(\omega)-X_j(\omega)|\le \frac{s}{w_N}\right\}=2s.
\end{equation}
Here and later, suppose that $N \in D_m \cup R_m$, for some integer $m\ge 2$. 

First, let us show that, if $|Z_i(\omega)-Z_j(\omega)|\le s/N$ for some $1\le i<j\le N$, then the random variables $Z_i$ and $Z_j$ cannot be both deterministic, provided that $m$ is sufficiently large. Indeed, in such case, we would have that the minimal possible distance between (necessarily distinct) deterministic points with indexes in $[1,N] \cap \mathcal{D}$ satisfies
$$
\frac{1}{2^{b(m)}} \le \min_{\substack{1\le i< j\le N, \\ i,j \in \mathcal{D}}}|Z_i(\omega)-Z_j(\omega)| \le \frac{s}{N} \le \frac{s}{2^{m-1}+2^{b(m-1)}},
$$
which is impossible if $m$ is sufficiently large, since $\tilde{b}(m)\to \infty$ as $m\to \infty$.

Second, since 
\begin{equation}\label{eq:bothrandom}
\frac{1}{N}\#\left\{1\le i\neq j\le N: |Z_i(\omega)-Z_j(\omega)| \le \frac{s}{N} \text{ and }i,j \in \mathcal{R}\right\}
\end{equation}
can be rewritten as 
$$
\frac{1}{N}\#\left\{1\le i\neq j\le N-\#\left([1,N]\cap \mathcal{D}\right): |X_i(\omega)-X_j(\omega)| \le \frac{s}{N}\right\},
$$
and $\#\left([1,N]\cap \mathcal{D}\right) \le 2^{b(m)}=o(N)$, it follows by \eqref{eq:ppchypoequivalent} that \eqref{eq:bothrandom} has limit $2s$.

Lastly, to conclude the proof, we need to show
$$
\#\left\{1\le i< j\le N: |Z_i(\omega)-Z_j(\omega)| \le \frac{s}{N}, i \in \mathcal{R}, \text{ and }j \in \mathcal{D}\right\}=o(N).
$$
Let us suppose for the sake of contradiction that this is false. Then there exist $\delta>0$ and an infinite sequence $(N_k)$ of positive integers such that
\begin{equation}\label{eq:chaychyhptohwhcf}
\#\left\{1\le i< j\le N_k: |Z_i(\omega)-Z_j(\omega)| \le \frac{s}{N_k},i \in \mathcal{R}, \text{ and }j \in \mathcal{D}\right\}\ge \delta N_k
\end{equation}
for all $k \in \NNb^+$. Set $d_k:=\#\left([1,N_k] \cap \mathcal{D}\right)$ and let $m_k \in \NNb^+$ be the integer such that $N_k \in D_{m_k} \cup R_{m_k}$ for each $k \in \NNb^+$. In particular, 
$
2^{b(m_k-1)}\le d_k \le 2^{b(m_k)}
$ 
for all $k \in \NNb^+$. At this point, let $\eta_{k,1},\ldots,\eta_{k,d_k}$ be those elements in the index set $\mathcal{D}$ which are $\le N_k$ (note that they depend on $\omega$), and define 
$$
\nu_{k,j}:=\#\left\{1\le i\le N_k: |Z_i(\omega)-Z_{\eta_{k,j}}|\le \frac{s}{N_k} \text{ and }i \in \mathcal{R}\right\}
$$
for all $k \in \NNb^+$ and $j=1,\ldots,d_k$. Since the above sets are pairwise disjoint if $m$ is sufficiently large, it follows by \eqref{eq:chaychyhptohwhcf} that 
$
\sum_{j=1}^{d_k} \nu_{k,j} \ge \delta N_k
$ 
for all $k \in \NNb^+$. Hence, by Cauchy--Schwartz's inequality, we obtain
\begin{equation}\label{eq:contradiction}
\begin{split}
\sum_{j=1}^{d_k} \nu_{k,j}^2 \ge \frac{1}{d_k}\left(\sum_{j=1}^{d_k} \nu_{k,j} \right)^2 &\ge \frac{1}{2^{b(m_k)}}(\delta N_k)^2 \\
&\gg 
\frac{N_k^2}{2^{m_k-\tilde{b}(m_k)}} \gg 
\frac{N_k^2}{N_k / 2^{\tilde{b}(m_k)}}=N_k\cdot 2^{\tilde{b}(m_k)}.
\end{split}
\end{equation}
However, if $|Z_{i_1}(\omega)-Z_{\eta_{k,j}}|\le s/N_k$ and $|Z_{i_2}(\omega)-Z_{\eta_{k,j}}|\le s/N_k$ for some $1\le i_1\neq i_2\le N_k$ with $i_1,i_2 \in \mathcal{R}$, then 
$|Z_{i_1}(\omega)-Z_{i_2}(\omega)|\le 2s/N_k$. 
Together with \eqref{eq:contradiction}, this implies that 
$$
\frac{1}{N_k}\#\left\{1\le i\neq j\le N_k: |Z_i(\omega)-Z_j(\omega)|\le \frac{2s}{N_k} \text{ and }i,j \in \mathcal{R}\right\} \gg 2^{\tilde{b}(m_k)}\to \infty,
$$
which is contradiction since, by the argument above, the left hand side has limit $4s$ as $k\to \infty$.
\end{proof}

\begin{claim}\label{boundgaps}
\emph{\textup{(}Bounding the number of gaps.\textup{)}} 
Fix a function $q: \NNb^+\to \NNb^+$ such that $\lim_{n\to \infty} q(n)=\infty$. 
Then there exists a sequence $(x_n)$ with Poissonian pair correlations such that $g(n)\ll q(n)$ as $n\to \infty$.
\end{claim}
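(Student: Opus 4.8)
The plan is to derive Claim \ref{boundgaps} from Claim \ref{claim:ppc_almostsurelyrandomZ} by choosing the auxiliary functions $a$ and $b$ appropriately and then bounding $g(n)$ pathwise for (almost) every $\omega$. First I would pick a weakly increasing, unbounded function $h\colon \NNb^+\to\NNb^+$ that grows very slowly compared to $q$ and to the identity — concretely something like $h(m):=\min\{q(2^m), \lfloor \sqrt m\rfloor\}$, adjusted to be weakly increasing — and then set $b(m):=m-h(m)$ and $a(m):=h(m)$ (possibly with a few small ad hoc modifications at the start so that $b$ is weakly increasing, nonnegative, and unbounded, and $\tilde b(m)=m-b(m)=h(m)$ satisfies $0\le\tilde b(m)\le m$ and increases weakly to $\infty$). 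With these choices the hypotheses of Claim \ref{claim:ppc_almostsurelyrandomZ} are met, so there is a set $\Omega^\star$ of full probability on which $(Z_n(\omega))$ has Poissonian pair correlations; fix one such $\omega$ and let $(x_n):=(Z_n(\omega))$.

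Next I would count gaps pathwise. Fix $n$ and let $m$ be the index with $n\in D_m\cup R_m$; then, by construction of the blocks (recall the definitions \eqref{am} of $A_m$ and \eqref{eq:bm} of $B_m$), every point among $Z_1(\omega),\dots,Z_n(\omega)$ is a rational with denominator $2^{m+a(m)}$ (the random block points lie in $A_1,\dots,A_m$, all of which are subsets of the grid with denominator $2^{m+a(m)}$ once one notes $A_k\subseteq A_m$ for $k\le m$, using that $k+a(k)\le m+a(m)$), while the deterministic points fill out all of $B_{m-1}$, i.e. the full grid with denominator $2^{b(m-1)}$. Hence $\{x_1,\dots,x_n\}$ contains the equally spaced set $B_{m-1}$ and is contained in the equally spaced set $A_m$ of mesh $2^{-(m+a(m))}$. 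Every gap between consecutive points of $\{x_1,\dots,x_n\}$ is therefore an integer multiple of $2^{-(m+a(m))}$ and, being contained in one of the length-$2^{-b(m-1)}$ intervals cut out by $B_{m-1}$, is at most $2^{-b(m-1)}$. Consequently each gap length is one of at most $2^{m+a(m)-b(m-1)}=2^{\,\tilde b(m)+a(m)+(b(m)-b(m-1))}$ possible values; since $a(m)=h(m)$, $\tilde b(m)=h(m)$, and $b(m)-b(m-1)$ is bounded (in fact $O(1)$ because $h$ grows by at most $1$ per step once we arrange it to, so this difference is $1$ or $2$), we get $g(n)\ll 2^{O(h(m))}$. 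Finally, because $n\ge 2^{b(m-1)}\gg 2^{m-h(m)}$ and $h(m)\le m/2$ say for large $m$, we have $m\ll\log n$, so $h(m)\le h(C\log n)$; choosing $h$ slowly enough (recall we built $h$ from $q$, e.g. $h(m)\approx q(2^m)$) this yields $g(n)\ll q(n)$, as required. Theorem \ref{thm:answerlarcher} then follows by applying Claim \ref{boundgaps} with $q(n):=\min_{k\le n}f(k)$ (which tends to $\infty$) and absorbing the implied constant using that any unbounded $f$ dominates a constant multiple of some such $q$ eventually; more simply, one replaces $f$ by a sufficiently slowly growing $q\le f$ at the outset.

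The main obstacle is bookkeeping rather than conceptual: one must choose $a$ and $b$ so that \emph{simultaneously} (i) $\tilde b(m)=m-b(m)$ is nonnegative, weakly increasing, and unbounded (needed for Claim \ref{claim:ppc_almostsurelyrandomZ}); (ii) $a$ is weakly increasing and unbounded (needed for Claims \ref{claim:expectedvalue}–\ref{claim:ppc_almostsurelyrandom}); and (iii) the resulting bound $2^{m+a(m)-b(m-1)}$ is $\ll q(n)$ when $n\asymp 2^{b(m-1)}$. The tension is that making $a$ and $\tilde b$ large helps the gap count in the exponent's $-b(m-1)$ term only indirectly, and one needs $a(m)+\tilde b(m)$ to be $o$ of anything comparable to $\log q(2^m)$-type growth; the clean way out is to set both $a(m)$ and $\tilde b(m)$ equal to a single function $h(m)$ that is $\le \frac12\log_2 q(2^{m/2})$ (rounded and made monotone), so that $2^{2h(m)+O(1)}\le q(2^{m/2})\le q(n)$ for large $n$. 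Verifying that such an $h$ exists — i.e. that one can always find a weakly increasing unbounded integer function below a prescribed unbounded function, with controlled step size — is an elementary but slightly fiddly lemma that I would state and dispatch quickly before making the choices above.
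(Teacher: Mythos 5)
Your proposal is correct and follows essentially the same route as the paper: choose $a$ and $b$ so that $a(m)+\tilde b(m)\approx \log_2 q$ evaluated at the relevant scale, invoke Claim \ref{claim:ppc_almostsurelyrandomZ} to get a PPC realization, and bound $g(n)$ via the sandwich $B_{m-1}\subseteq\{x_1,\dots,x_n\}\subseteq A_m$, which gives $g(n)\le 2^{m+a(m)-b(m-1)}$. The paper's only (cosmetic) difference is to split a single function $h=\lfloor\log_2 q\rfloor$ as $a(m)=\lceil h(m)/2\rceil$ and $b(m)=m+1-\lfloor h(m+1)/2\rfloor$ so that the exponent collapses exactly to $h(m)$, whereas you take $a=\tilde b=h$ and compensate by choosing $h$ about half as large.
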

\begin{proof}
Note that can be assumed without loss of generality that $2\le q(n) \le 2^n$ for all $n \in \NNb^+$ and that $q$ is weakly increasing. Then define the function $h: \NNb^+ \to \NNb^+$ by $h(n)=\lfloor \log_2 q(n)\rfloor$ for all $n \in \NNb^+$ (in particular, $h(n)\le n$ and $\lim_{n\to \infty} h(n)=\infty$). At this point, define the functions $a,b: \mathbf{N}^+ \to \NNb^+$ by
$$
a(m)=\lceil h(m)/2\rceil \,\,\,\,\,\text{ and }\,\,\,\,\,\,b(m)=m+1-\lfloor h(m+1)/2\, \rfloor
$$
for all $m \in \NNb^+$. Then, thanks to Claim \ref{claim:ppc_almostsurelyrandomZ}, we have that almost all sequences $(Z_n(\omega))$ have Poissonian pair correlations. Pick one such  $\omega$. With the notation of Claim \ref{claim:ppc_almostsurelyrandomZ}, fix $n,m \in \NNb^+$ such that $n \in D_m \cup R_m$. Then, recalling the definitions \eqref{am} and \eqref{eq:bm}, we have the inclusions
$$
B_{m-1}\subseteq \{Z_1(\omega),\ldots,Z_n(\omega)\} \subseteq A_m.
$$
Note that each interval $\left[\frac{j}{2^{b(m-1)}}, \frac{j+1}{2^{b(m-1)}}\right]$ with endpoints in $B_{m-1}$ contains exactly 
$$
\frac{1}{2^{b(m-1)}}/\frac{1}{2^{m+a(m)}}=2^{m+a(m)-b(m-1)}
$$
consecutive intervals with endpoints in $A_m$. 
Considering that $m+a(m)-b(m-1)=h(m)+O(1)$, it follows that 
\begin{displaymath}
g(n) \le 2^{m+a(m)-b(m-1)}\ll 2^{h(m)} \ll q(m) \le q(n),
\end{displaymath}
\end{proof}

To conclude the proof of Theorem \ref{thm:answerlarcher}, fix a function $f: \NNb^+ \to \NNb^+$ such that $\lim_{n\to \infty}f(n)=\infty$, and let $q: \NNb^+ \to \NNb^+$ be another function such that $\lim_{n\to \infty} q(n)=\infty$ and $q(n)=o(f(n))$ as $n\to \infty$. It follows by Claim \ref{boundgaps} that there exist a constant $c>0$ and a sequence $(x_n)$ with Poissonian pair correlations such that
$$
g(n) \le cq(n) \le f(n)
$$
for all sufficiently large $n$. This completes the proof. 
\end{proof}


\subsection{Acknowledgements.} The authors are thankful to Salvatore Tringali (Hebei Normal University, CHN) 
for several comments regarding the exposition of the article.

\bibliographystyle{amsplain}
\bibliography{ppc}
\end{document}